\documentclass[a4paper, 11pt, parskip=half]{amsart}

\usepackage{customtemplate}
\usepackage{mathtools}
\usepackage{tabularx}
\usepackage{framed}
\usepackage{array}
\usepackage{shortcuts}
\usepackage{stmaryrd}
\usepackage{bbm}
\begin{document}
	\title{Effective Joint Sato--Tate Distribution and Sign Change of Symmetric Power Coefficients}
	\date{}
	\author{Arvind Kumar, Moni Kumari, Prabhat Kumar Mishra}
    \address{Department of Mathematics, Indian Institute of Technology Jammu, Jagti, PO Nagrota, NH-44 Jammu 181221, J \& K, India\vspace*{-5pt}}
    \email{arvind.kumar@iitjammu.ac.in\vspace*{-6pt}}
	\email{moni.kumari@iitjammu.ac.in\vspace*{-6pt}}
	\email{2022rma0025@iitjammu.ac.in\vspace*{-6pt}}

    \subjclass[2020]{Primary: 11F11; Secondary: 11F30}
	\keywords{Equidistribution, Fourier coefficients, symmetric power $L$-functions, Sign change}

\begin{abstract}
We prove an unconditional, effective joint Sato--Tate distribution for the Fourier coefficients of two twist-inequivalent, non-CM newforms $f$ and $f'$. Our result generalises a result of Thorner, which holds for rectangular regions, by extending it to any measurable region of $[-2,2]^2$ whose boundary consists of finitely many continuous curves of finite length.

As a consequence, we develop a unified framework to study various arithmetic properties of Fourier coefficients of symmetric power $L$-functions attached to $f$ and $f'$. In particular, for these coefficients (and their polynomial expressions), we obtain
effective distribution results, quantitative statements on simultaneous sign behaviour, and bounds for the first sign change.  
\end{abstract}
\maketitle
\section{Introduction and Statements of Main Results}
Let $f(z)=\sum_{n=1}^{\infty}a(n)n^{\frac{k-1}{2}}e^{2 \pi i n  z} \in S_{k}(N)$
be a normalised non-CM newform of weight $k$ and level $N$ with trivial nebentypus. By Deligne’s proof of Weil's conjecture, for every prime $p$, the Fourier coefficient satisfies 
$|a(p)| \le 2.$
While Deligne's bound provides a range for $a(p)$, it is natural to study the distribution of these coefficients. The Sato–Tate conjecture, a result now established in a series of landmark papers by Barnet-Lamb, Geraghty, Harris, and Taylor \cite{lamb}, gives an answer to this question. It asserts that the values ${a(p)}$ are equidistributed in the interval $[-2, 2]$ with respect to the Sato--Tate measure
$$d\mu_{\rm ST}=\frac{1}{\pi}\sqrt{1-\frac{u^2}{4}}du.$$ 
More precisely, for any measurable subset $B \subseteq [-2,2]$,
\begin{equation}\label{eq: st} 
\pi_{f,B}(x) := \#\{p \le x :  p\nmid N, a(p) \in B\} \sim \mu_{\rm ST}(B) \pi(x), \quad \text{as } x \to \infty.
\end{equation}
For many arithmetic applications, one also requires an effective estimate for the rate of convergence in \eqref{eq: st}. In a fundamental work, Thorner \cite[Theorem~1.1]{thorner} established such an estimate for intervals, proving that
\begin{equation*}\label{est} 
  \pi_{f,I}(x)= \mu_{\rm ST}(I)\pi(x) +O\left(  \frac{ \pi(x)\log(kN \log x)}{\sqrt{\log x}}\right),  
\end{equation*} 
uniformly for every closed interval $I\subseteq[-2,2]$, where the implied constant is absolute and effectively computable. More recently, Hoey, Iskander, Jin, and Trejos-Su\'arez \cite[Theorem~1.1]{hoey} determined this constant explicitly. Effective Sato--Tate theorems have since become an important ingredient in the study of the arithmetic of Fourier coefficients; see, for instance, \cite[Theorem 1.1]{luca} and \cite{kim}).

We now consider another normalised non-CM newform $f'(z)=\sum_{n=1}^{\infty}a'(n)n^{\frac{k'-1}{2}}e^{2 \pi i nz}\in S_{k'}(N')$ which is twist-inequivalent to $f$, that is, there exists no primitive character $\chi$ that satisfies $f' =f \otimes \chi$.
In this setting, the object of interest is the sequence of pairs $(a(p),a'(p)) \in [-2,2]^2$. 
Wong \cite{wong} proved that these pairs are equidistributed with respect to the product Sato--Tate measure
$$
d\mu_{\rm JST}(u,v)
=
\frac1{\pi^2}
\sqrt{1-\frac{u^2}{4}}
\sqrt{1-\frac{v^2}{4}}
\,du\,dv.
$$
Equivalently, for every measurable set $E\subseteq[-2,2]^2$ whose boundary has Lebesgue measure zero,
\begin{equation*}\label{eq: jst} 
\pi_{f, f', E}(x) := \#\{p \le x : p \nmid NN',\ (a(p), a'(p)) \in E\} \sim \mu_{\rm JST}(E)\pi(x), \quad \text{as } x \rightarrow \infty.
\end{equation*} 

Obtaining an effective version of this joint distribution is substantially more delicate. Unlike the one-dimensional setting, one needs to approximate the indicator function of a planar region by suitable test functions that depend strongly on the geometry of the region under consideration.

Thorner  \cite[Theorem 1.2]{thorner} obtained an effective version of the joint Sato--Tate theorem in the case where the region $E=R \subset[-2,2]^2$  is a closed rectangle with sides parallel to the coordinate axes. His argument exploits the product structure of rectangles, reducing the problem to one-dimensional approximations and allowing the effective Sato--Tate theorem to be applied in each coordinate separately. Chen and Shen \cite[Theorem~1.5]{shen} obtained effective joint Sato--Tate for certain hyperbolic regions arising in the study of elliptic curves. Their method approximates the hyperbolic domain by a union of sufficiently many rectangles and combines Thorner's estimate with a careful analysis of the geometric approximation error. More recently, Thorner \cite[Theorem~2.1(2)]{thorner2} refined the underlying analytic estimates and proved that
\begin{equation}\label{lemma 1.2 analogue}
\pi_{f,f',R}(x)
=
\mu_{\rm JST}(R)\pi(x)
+
O\!\left(
\pi(x)
\left(
\frac{\log(kk'NN'\log x)}
{\sqrt{\log x}}
\right)^{1/2}
\right),
\end{equation}
for any rectangle $R\subseteq[-2,2]^2$ with sides parallel to the coordinate axes, where the implied constant is positive, absolute, and effectively computable. Furthermore, when the levels of the newforms $N,N'$ are squarefree, the decay rate of the error term in \eqref{lemma 1.2 analogue} is even faster, as the exponent $1/2$ can be removed.

The effective joint Sato--Tate theorems described above already provide powerful tools for studying the distribution of pairs of Fourier coefficients. In many arithmetic problems, one is naturally led to count primes for which the pair
$(a(p),a'(p))$ 
satisfies a prescribed condition. Such conditions define subsets of  $[-2,2]^2$, whose geometry varies according to the problem under consideration. This motivates the search for an effective joint Sato--Tate theorem that applies to a large and flexible class of planar regions.

The main result of this paper provides such a theorem. We establish an effective joint Sato--Tate theorem for Borel subsets of $[-2,2]^2$ whose boundaries are finite unions of continuous curves of finite length. The resulting error term is determined solely by natural geometric characteristics of the boundary, such as its total length and the number of its connected components, and is otherwise independent of the particular shape of the region. Consequently, the theorem applies uniformly to a broad class of regions without requiring a separate geometric analysis for each application.

The versatility of the theorem is illustrated in the next section through several applications concerning the Fourier coefficients of symmetric power $L$-functions attached to two distinct newforms. These include quantitative distribution results for polynomial expressions in the coefficients $a(p^m)$ and $a'(p^n)$, effective estimates for simultaneous sign changes, and upper bounds for the least prime satisfying prescribed sign conditions.

A common principle underlies all of these applications. Each arithmetic condition determines a subset of $[-2,2]^2$, reducing the original problem to estimating the number of primes for which
$(a(p),a'(p)) $
belongs to that subset. Our effective joint Sato--Tate theorem then provides the required asymptotic estimate. In this way, a wide variety of arithmetic questions can be treated within a single framework, providing a unified approach to problems that at first sight appear to be unrelated.

The methods developed in this paper have further applications beyond those presented here. For example, in \cite{kkm2}, they are used to establish a new criterion for determining newforms from arithmetic inequalities among their Fourier coefficients.

\subsection*{Main results}
We now state the main results of the paper. Throughout, we assume that
$$
f(z)=\sum_{n=1}^{\infty}a(n)n^{\frac{k-1}{2}}e^{2\pi inz}\in S_k(N)
\quad\text{and}\quad
f'(z)=\sum_{n=1}^{\infty}a'(n)n^{\frac{k'-1}{2}}e^{2\pi inz}\in S_{k'}(N')
$$
are two normalised non-CM newforms. For $x\ge2$, define
\begin{equation}\label{eq:mx}
\mathcal M(x)=
\left(
\frac{\sqrt{\log x}}
{\log(kk'NN'\log x)}
\right)^{1/2}.
\end{equation}
Our first result establishes an effective joint Sato--Tate theorem for Borel subsets of $[-2,2]^2$ whose boundaries are finite unions of continuous curves of finite length.
\begin{theorem}\label{thm:main}
Let $f$ and $f'$ be two twist-inequivalent non-CM newforms. Let $E\subseteq[-2,2]^2$ be a Borel measurable set whose boundary
$$
\partial E=\Gamma_1\cup\cdots\cup\Gamma_\alpha,
$$
is a union of continuous curves of finite length. Then
$$
\pi_{f,f',E}(x)
=
\mu_{\rm JST}(E)\pi(x)
+
O\!\left(
L\alpha
\frac{\pi(x)}
{\mathcal M(x)^{1/3}}
\right),
$$
where $L$ denotes the total length of $\partial E$. The implied constant is positive, absolute, and effectively computable.
\end{theorem}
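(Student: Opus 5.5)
The plan is to reduce to Thorner's rectangle estimate \eqref{lemma 1.2 analogue} by a grid approximation, with the mesh size chosen to balance the geometric error against the analytic error. Fix a parameter $\delta\in(0,1]$, to be chosen later as a negative power of $\mathcal M(x)$, and partition $[-2,2]^2$ into $\asymp \delta^{-2}$ closed squares $Q$ of side $\delta$ with sides parallel to the axes. Split the squares into three classes: \emph{inner} squares with $Q\subseteq E^\circ$, \emph{outer} squares with $Q\cap \overline{E}=\emptyset$, and \emph{boundary} squares with $Q\cap\partial E\neq\emptyset$. Every square lies in exactly one class. Indeed, if $Q$ meets neither $\partial E$ nor the exterior, then $Q$ is connected and disjoint from $\partial E$, so it lies entirely in $E^\circ$ or entirely in the exterior of $\overline E$. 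Let $E^-$ be the union of the inner squares and $E^+=E^-\cup\bigcup_{Q\text{ bdry}}Q$. Then $E^-\subseteq E\subseteq E^+$, so $\pi_{f,f',E^-}(x)\le \pi_{f,f',E}(x)\le \pi_{f,f',E^+}(x)$, and the same sandwich holds for $\mu_{\rm JST}$.

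The key geometric step is to count the boundary squares. A continuous curve $\Gamma_j$ of length $L_j$ can be cut into at most $L_j/\delta+1$ arcs of length at most $\delta$. Each such arc has diameter at most $\delta$, so it meets at most $4$ grid squares (up to a bounded number when it touches grid lines). Summing over $j$ shows that the number of boundary squares is $\ll L/\delta+\alpha$. The additive $\alpha$ is the source of the factor $\alpha$ in the final bound, since $L/\delta+\alpha\le 2L\alpha/\delta$ once $L\gg\delta$; the degenerate case of very short boundary is handled separately or absorbed. Because the density of $\mu_{\rm JST}$ is bounded by $1/\pi^2$, we get
\[
\mu_{\rm JST}(E^+\setminus E^-)\ll \delta^2(L/\delta+\alpha)\ll L\alpha\,\delta .
\]

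For the analytic step, apply \eqref{lemma 1.2 analogue} to each square of $E^\pm$. This gives an error $O(\pi(x)\mathcal M(x)^{-1})$ per square. Primes lying on shared edges are counted with multiplicity, but that only pushes the upper bound in the correct direction; for the lower bound, use half-open squares, whose counts are differences of closed-rectangle counts. Adding up naively over all $\asymp\delta^{-2}$ squares would cost $\delta^{-2}\mathcal M^{-1}$, and this would force $\delta=\mathcal M^{-1/3}$, which is exactly the exponent $1/3$ in the statement. A better approach is to merge the inner squares of each grid column into maximal vertical rectangles, though $\delta^{-2}$ is an acceptable count if we aim only for $\mathcal M^{-1/3}$. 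In total,
\[
\pi_{f,f',E^\pm}(x)=\mu_{\rm JST}(E^\pm)\pi(x)+O\!\left(\delta^{-2}\pi(x)\mathcal M(x)^{-1}\right).
\]
Combining this with the sandwich and the measure bound yields an error $\ll \pi(x)\bigl(L\alpha\delta+\delta^{-2}\mathcal M^{-1}\bigr)$. Choosing $\delta=\mathcal M(x)^{-1/3}$ gives $\ll (L\alpha+1)\pi(x)\mathcal M^{-1/3}$. Finally, $1\ll L\alpha$ unless the boundary is tiny; in that case $E$ or its complement has measure $\ll L^2$ and can be handled directly, or we simply note that $L\ge$ some absolute constant whenever $\partial E$ is nontrivial inside $[-2,2]^2$ (treating $\partial[-2,2]^2$ as included).

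I expect the main obstacle to be the boundary-square count with a clean dependence on $L$ and $\alpha$. The count must be uniform in the shape of the curves, and it must handle curves that run along grid lines or are extremely short. I would first try the arc-subdivision argument above. If needed, I would shift the grid by a generic offset, which an averaging argument makes possible, so that boundary curves do not degenerate along grid lines.
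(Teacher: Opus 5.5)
Your proposal is correct and follows essentially the same route as the paper: a grid of $\asymp \mathcal M(x)^{2/3}$ boxes, Thorner's rectangle estimate \eqref{lemma 1.2 analogue} applied box by box at total cost $\delta^{-2}\mathcal M(x)^{-1}$, a count of $\ll L/\delta+\alpha$ boxes meeting $\partial E$, and the choice $\delta=\mathcal M(x)^{-1/3}$. The differences are only technical: the paper uses a transcendental grid offset instead of half-open boxes, so that the algebraic pairs $(a(p),a'(p))$ never lie on a grid line; it counts boundary boxes via the area of the $\delta$-neighbourhood of $\partial E$ rather than by arc subdivision; and it tacitly relies on the same normalisation $L\alpha\gg 1$ that you flag, both when it writes $Lm+\alpha$ as $O(Lm\alpha)$ and when it absorbs the $m^2\pi(x)/\mathcal M(x)$ term into $L\alpha\,\pi(x)/\mathcal M(x)^{1/3}$.
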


 For rectangular regions, Thorner proved a sharper estimate \cite[Theorem~1.2]{thorner2}. The strength of Theorem~\ref{thm:main} is that it applies to arbitrary Borel subsets of $[-2,2]^2$ satisfying the above boundary condition. This level of generality is essential for the applications developed later in the paper. Also, the appearance of the parameters $L$ and $\alpha$ reflects the fact that the error term depends only on the geometry of the boundary of the region and not on its particular shape.

Many regions arising in arithmetic applications satisfy an additional regularity property that allows the error term in Theorem~\ref{thm:main} to be improved. We now formulate the required hypothesis.

\begin{hypothesis}\label{hypothesis}
Let $E\subseteq[-2,2]^2$ be a Borel measurable set whose boundary
$$
\partial E=\Gamma_1\cup\cdots\cup\Gamma_\alpha,
$$
is a union of continuous curves of finite length. Assume that each curve $\Gamma_t$ satisfies one of the following conditions.

\begin{enumerate}
\item
$\Gamma_t$ is contained in a vertical line in $[-2,2]^2$; or

\item
there exists a constant $\beta\ge1$ such that, for every $a\in[-2,2]$,
$$
\#
\left\{
\Gamma_t
\cap
\{(u,v):u=a\}\right\}
\le\beta.
$$
\end{enumerate}
\end{hypothesis}

Hypothesis~\ref{hypothesis} bounds the number of intersections of each boundary component with a vertical line, thereby excluding excessive horizontal oscillation. Under this additional assumption, the exponent in the error term improves from $1/3$ to $1/2$.

\begin{theorem}\label{thm:main2}
Let $f$ and $f'$ be two twist-inequivalent non-CM newforms. Let $E\subseteq[-2,2]^2$ satisfy Hypothesis~\ref{hypothesis}. Then
$$
\pi_{f,f',E}(x)
=
\mu_{\rm JST}(E)\pi(x)
+
O\!\left(
L\alpha\beta
\frac{\pi(x)}
{\mathcal M(x)^{1/2}}
\right),
$$
where $L$ denotes the total length of $\partial E$. The implied constant is positive, absolute, and effectively computable.
\end{theorem}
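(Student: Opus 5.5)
Under Hypothesis~\ref{hypothesis} we approximate $E$ from inside and outside by unions of axis-parallel rectangles built on vertical strips, and apply Thorner's rectangle estimate \eqref{lemma 1.2 analogue} to each rectangle. Fix a parameter $\delta\in(0,1)$ and partition $[-2,2]$ into $J\asymp 1/\delta$ intervals $I_j$ of length $\delta$. On each strip $S_j=I_j\times[-2,2]$ we will do two things. First, identify the \emph{bad} part of $S_j$, meaning points within vertical distance $\delta$ of $\partial E\cap S_j$ (together with strips met by vertical components of type~(1)). Second, show that the complement $S_j\setminus(\text{bad part})$ is a union of $O(\text{number of boundary crossings})$ rectangles $I_j\times[c,d]$, each entirely inside or entirely outside $E$. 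The reason is that on such a rectangle no boundary point occurs, so membership in $E$ is constant on it by connectedness.

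To count pieces, note that by condition~(2) each non-vertical curve $\Gamma_t$ meets each vertical line at most $\beta$ times. Over $I_j$, the piece $\Gamma_t\cap S_j$ therefore splits into at most $\beta$ ``branches''. Each branch is the graph-like arc, and its vertical extent over $I_j$ is bounded by its length $\ell_{t,j}$ inside $S_j$ plus $O(\delta)$. We cover the branch by a box $I_j\times[m-\delta,M+\delta]$, where $m,M$ are its min/max heights. The bad set in $S_j$ then consists of at most $\alpha\beta$ boxes plus the vertical-curve strips, and its complement in $S_j$ consists of $O(\alpha\beta)$ rectangles. The total $\mu_{\rm JST}$-measure of bad boxes (the density is bounded) is
\[
\ll \sum_j\sum_t \delta\bigl(\beta\delta+\ell_{t,j}\bigr)\ll \alpha\beta\delta + \beta\delta L \ll L\alpha\beta\,\delta,
\]
using $L\gg$ a constant when $E$ is nontrivial; the type~(1) curves contribute $\ll \alpha\delta$ similarly. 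The bad boxes are themselves rectangles, so we apply \eqref{lemma 1.2 analogue} to them as well, to bound the number of primes they capture.

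Next we assemble the estimate. We write $\pi_{f,f',E}(x)$ as the sum over good rectangles inside $E$ plus a quantity bounded by primes in bad boxes. Since the number of rectangles is $O(\alpha\beta/\delta)$, this gives
\[
\pi_{f,f',E}(x)=\mu_{\rm JST}(E)\pi(x)+O\Bigl(L\alpha\beta\,\delta\,\pi(x)+\frac{\alpha\beta}{\delta}\,\frac{\pi(x)}{\mathcal M(x)}\Bigr),
\]
because the error in \eqref{lemma 1.2 analogue} is exactly $\pi(x)/\mathcal M(x)$. Choosing $\delta=\mathcal M(x)^{-1/2}$ balances the two terms and yields the claimed $L\alpha\beta\,\pi(x)/\mathcal M(x)^{1/2}$.

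The main obstacle is the geometric step. We must rigorously justify that within each strip the branches of a continuous curve with at most $\beta$ vertical intersections have vertical oscillation controlled by their length, so that no ``tall thin'' pieces escape the count. We must also check that the complement rectangles really have constant membership. For the oscillation bound we would use that a connected arc in $S_j$ has vertical range at most its length. For constancy, a segment joining two points of a good rectangle avoids $\partial E$, so both points lie in the interior or both in the exterior. Boundary points lying exactly on strip edges are handled by closed/half-open interval conventions, at a cost of only $O(\delta)$ measure.
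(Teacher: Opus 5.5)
Your argument is correct in substance. It shares the paper's skeleton: vertical strips of width $\asymp\mathcal M(x)^{-1/2}$, Hypothesis~\ref{hypothesis} used to cut each strip into $O(\alpha\beta)$ rectangles, \eqref{lemma 1.2 analogue} applied to each rectangle, and balancing. Where you differ is the treatment of the boundary layer.

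The paper lays down a full $m\times m$ grid and merges only the cells inside $E^{\circ}$ into vertical rectangles. It treats the cells meeting $\partial E$ one by one, counting them ($\ll Lm+\alpha$) via the area of a tubular neighbourhood of $\partial E$. That part is inherited verbatim from Theorem~\ref{thm:main}, does not use the hypothesis, and relies on transcendental spacing to avoid double counting.

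You drop the horizontal grid altogether. Each branch of $\Gamma_t\cap S_j$ is covered by one tall box, whose height is controlled by the fact that a connected set has vertical range at most its length. As a result:
\begin{itemize}
\item the hypothesis governs the bad rectangles as well as the good ones;
\item the number of rectangles is $O(\alpha\beta/\delta)$, independent of $L$;
\item the bad measure is $O((L+\alpha\beta)\delta)$;
\item the gap between $\mu_{\rm JST}(E)$ and the good main term is bounded quantitatively, rather than by the paper's limiting argument $\mu_{\rm JST}(\mathcal S)\to\mu_{\rm JST}(E)$.
\end{itemize}
The two routes end at the same bound.

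Two points need tightening.

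First, the number of branches is at most $2\beta$, not $\beta$: a curve can end just inside the strip near each edge, which already gives $\beta+1$ components. The correct justification is boundary bumping for continua. Unless $\Gamma_t\subset S_j$, each connected component of $\Gamma_t\cap S_j$ meets one of the two edges of $S_j$, and these two edges contain at most $2\beta$ points of $\Gamma_t$. This edge-anchored count is actually sturdier than the paper's count of crossings along one interior line $L_j$. That count does not see boundary components of $\partial E$ in the strip that miss $L_j$ (for example, small holes), although its final bound $N_j\ll\alpha\beta$ survives by your component argument.

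Second, both arguments really give the error $(L+\alpha\beta)\pi(x)/\mathcal M(x)^{1/2}$, and your step ``$L\gg1$ for nontrivial $E$'' is not valid. Take a prime $p_0\nmid NN'$, fix $x\ge p_0$, and let $E$ be a disc of radius $\varepsilon$ centred at $(a(p_0),a'(p_0))$. As $\varepsilon\to0$ the discrepancy stays $\ge 1-o(1)$, while $L\alpha\beta\to0$. The paper makes the same tacit normalisation: it writes $\mathcal N=O(Lm\alpha)$ in place of $O(Lm+\alpha)$ and absorbs the term $m\alpha\beta/\mathcal M(x)$. So this is a defect of the stated form of the theorem, not of your route.
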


The error terms in Theorems~\ref{thm:main} and~\ref{thm:main2} depend only on three simple geometric characteristics of the boundary of $E$: its total length, the number of boundary components, and, under Hypothesis~\ref{hypothesis}, the maximal number of intersections with a vertical line. For regions of fixed shape, these quantities are absolute constants. For example, rectangles satisfy $L\le16$, $\alpha\le4$, and $\beta=1$, while the hyperbolic regions considered by Chen and Shen \cite[Theorem~1.5]{shen} satisfy $L\le16$, $\alpha\le6$, and $\beta=1$. Consequently, the same asymptotic estimates apply uniformly to arbitrary regions satisfying the stated hypotheses, with the dependence on the geometry made completely explicit.

 In most of our applications, the region $E$ inside $[-2,2]^2$ is defined by finitely many polynomial inequalities. We call such a region a semi-algebraic region of $[-2,2]^2$.  
It is easy to see that such a region satisfies Hypothesis~\ref{hypothesis}. We record this fact in the following lemma, which will also be used later in the paper. Moreover, for such a region $E$, the geometric parameters $L,\alpha$, and $\beta$ appearing in Theorem \ref{thm:main2} can be bounded only in terms of the degrees of the defining polynomials of the region $E$ (see Lemma \ref{lemma: alpha}). 
\begin{lemma}\label{lem:alg}
Let $E\subseteq[-2,2]^2$ be a Borel measurable set with boundary
$$
\partial E=\Gamma_1\cup\cdots\cup\Gamma_\alpha,
$$
where each $\Gamma_t$ is the zero set in $[-2,2]^2$ of a non-zero polynomial
$P_t(u,v)\in \R[u,v]$. Then $E$ satisfies Hypothesis~\ref{hypothesis}. In particular, one may take
$
\beta=\max_{1\le t\le\alpha}\deg P_t.
$ 
\end{lemma}
%Indeed, if $\Gamma_t$ is not contained in a vertical line, then for every $a\in[-2,2]$, the vertical line $u=a$ intersects the algebraic curve $P_t(u,v)=0$ in at most $\deg P_t$ points. Hence $$ \#\bigl(\Gamma_t\cap\{(u,v):u=a\}\bigr)\le \deg P_t, $$ which verifies the second alternative in Hypothesis~\ref{hypothesis}. Therefore, each boundary component satisfies one of the two conditions in Hypothesis~\ref{hypothesis}.

\begin{remark}\label{remark: under GRH}
Assuming the Generalised Riemann Hypothesis for all Rankin--Selberg $L$-functions
$
\mathrm{sym}^{m} f \otimes \mathrm{sym}^{n} f'$, Thorner \cite[Theorem~1.4]{thorner} obtained a stronger effective joint Sato--Tate estimate for rectangular regions. Combining Thorner's estimate with the arguments developed in this paper yields corresponding improvements in Theorems~\ref{thm:main} and~\ref{thm:main2}. More precisely, the error terms become
$$
\frac{L\alpha x^{17/18}\log(kk'NN'x)^{1/9}}
{(\log x)^{2/3}} \quad {\rm and} \quad 
\frac{L\alpha\beta x^{11/12}\log(kk'NN'x)^{1/6}}
{(\log x)^{1/2}},
$$
respectively. Consequently, every application established in this paper admits a corresponding improvement under GRH.
\end{remark}

\textbf{Notation.}
Throughout this article, the letter $p$ denotes a prime number, while $\ell, m,$ and $n$ are used to denote non-negative integers.
For any set $S$, we write $\#S$ for its cardinality, and $\pi(x)=\#\{p\le x\}$.
Given a function $F$ and a positive function $G$, the notation $F = O(G)$ (or equivalently $F \ll G$) means that there exists a constant $c$ such that $|F| \le c\,G$ in the relevant range.
For a Borel measurable subset $E$ of $[-2,2]^2$, we denote its boundary by $\partial E$, its closure by $\overline E$ and its interior by $E^{\circ}$. We assume that $f$ and $f'$ are normalised non-CM newforms, and by GRH, we mean the Generalised Riemann Hypothesis for all Rankin-Selberg $L$-functions attached to $f$ and $f'$. Finally, if $\Gamma$ is a continuous curve in $\bf R^2$ with parametrisation $\gamma:[0,1]\rightarrow \bf R^2$, then the length of $\Gamma$ is denoted by $\ell(\Gamma)$ and is defined as 
\begin{center}
$\ell(\Gamma)= \sup_{\mathcal P}\sum_{i=0}^{n-1}\mid\mid \gamma(t_{i+1})-\gamma(t_i)\mid\mid,
$\end{center}
where the supremum is over all partitions $\mathcal P:=\{t_0=0<t_1<\dots<t_n=1\}$ of $[0,1]$.

\section{Applications: Distributions and sign change of Symmetric power coefficients}\label{sec:application}
Let
$f(z)=\sum_{n=1}^{\infty}a(n)n^{\frac{k-1}{2}}e^{2\pi inz}\in S_k(N)
$
be a normalised non-CM newform, and let $\pi_f$ denote the associated cuspidal automorphic representation of $\mathrm{GL}_2(\mathbb A)$, where $\mathbb A$ is the ring of adeles over $\Q$. For each integer $m\ge0$, let $\mathrm{sym}^m\pi_f$ denote the $m$th symmetric power lift of $\pi_f$. By the recent works of Newton and Thorne \cite{newton1,newton2}, $\mathrm{sym}^m\pi_f$ is known to be a cuspidal automorphic representation of $\mathrm{GL}_{m+1}(\mathbb A)$ for every $m\ge0$. The associated symmetric power $L$-function is given by
$$
L(\mathrm{sym}^m f,s)
=
\prod_p
\prod_{r=0}^{m}
\left(
1-\frac{\alpha_p^{\,m-r}\beta_p^{\,r}}{p^s}
\right)^{-1}
=
\sum_{n=1}^{\infty}
\frac{a_{\mathrm{sym}^m f}(n)}{n^s},
\qquad
\Re(s)>1,
$$
where $\alpha_p$ and $\beta_p$ are the Satake parameters of $f$ at a prime
$p\nmid N$, satisfying
$$
\alpha_p+\beta_p=a(p),
\qquad
\alpha_p\beta_p=1.
$$

A fundamental feature of the symmetric power coefficients is that, at unramified primes, they are given by the Hecke eigenvalues of prime powers. Indeed, the Hecke relations imply that
\begin{equation}\label{eq:hecke}
a_{\mathrm{sym}^m f}(p)
=
a(p^m)
=
U_m\!\left(\frac{a(p)}{2}\right),
\qquad
p\nmid N,
\end{equation}
where $U_m$ denotes the Chebyshev polynomial of the second kind. Recall that
$$
U_m(\cos\theta)
=
\frac{\sin((m+1)\theta)}{\sin\theta},
$$
and that these polynomials satisfy the recurrence
$$
U_{m+1}(u)=2uU_m(u)-U_{m-1}(u),
$$
with initial values $U_0(u)=1$ and $U_1(u)=2u$.

Now let
$
f'(z)
=
\sum_{n=1}^{\infty}
a'(n)n^{\frac{k'-1}{2}}e^{2\pi inz}
\in
S_{k'}(N')
$
be another normalised non-CM newform with associated automorphic representation $\pi_{f'}$. For integers $m,n\ge0$, the Rankin--Selberg $L$-function attached to
$\mathrm{sym}^m\pi_f\otimes\mathrm{sym}^n\pi_{f'}$
is defined by
$$
L(\mathrm{sym}^m f\times\mathrm{sym}^n f',s)
=
\prod_p
\prod_{i=0}^{m}
\prod_{j=0}^{n}
\!\!\left(
1-
\frac{\alpha_p^{m-i}\beta_p^{i}
{\alpha_p'}^{\,n-j}{\beta_p'}^{\,j}}
{p^s}
\right)^{-1}\!\!\!\!\!\!
=\!\!
\sum_{r=1}^{\infty}
\frac{a_{\mathrm{sym}^m f\times\mathrm{sym}^n f'}(r)}
{r^s},
\quad
\Re(s)>1.
$$

The results of Newton and Thorne also imply that
$L(\mathrm{sym}^m f,s)$ extends to an entire function and satisfies the expected functional equation. The same is true for
$L(\mathrm{sym}^m f\times\mathrm{sym}^n f',s)$,
except when
$\mathrm{sym}^m\pi_f\simeq\mathrm{sym}^n\pi_{f'}$,
in which case it has simple poles at $s=0$ and $s=1$.

Assume henceforth that $f$ and $f'$ are twist-inequivalent. Then, for every prime $p\nmid NN'$,
$$
a_{\mathrm{sym}^m f\times\mathrm{sym}^n f'}(p)
=
a_{\mathrm{sym}^m f}(p)\,
a_{\mathrm{sym}^n f'}(p)
=
a(p^m)a'(p^n).
$$
Together with \eqref{eq:hecke}, this shows that the coefficients of the symmetric power Rankin--Selberg $L$-function at primes are polynomial functions of the pair $(a(p),a'(p))$. Consequently, every polynomial expression in the coefficients $a(p^m)$ and $a'(p^n)$ can be written in the form
$
P(a(p),a'(p)),
$
for a suitable polynomial $P\in\R[x,y]$. Since the pairs $(a(p),a'(p))$ are equidistributed in $[-2,2]^2$ with respect to the joint Sato--Tate measure $\mu_{\rm JST}$, it follows that the values
$$
P(a(p),a'(p))
$$
are distributed according to the pushforward of $\mu_{\rm JST}$ under $P$.

The main goal of this section is to make this distribution effective by applying Theorem~\ref{thm:main2}. This yields quantitative distribution results for polynomial expressions in the Fourier coefficients of two newforms, from which we derive several arithmetic consequences for the coefficients of symmetric power Rankin--Selberg $L$-functions.

Our first application of Theorem~\ref{thm:main2} is an effective equidistribution theorem for polynomial expressions in the Fourier coefficients of two newforms.
\begin{theorem}\label{thm:sign change}
Let $f$ and $f'$ be two twist-inequivalent newforms. Let $P(u,v)\in \R[u,v]$ be a non-constant polynomial of degree $\delta$. For any real interval $I$   we have
$$
\#\{p\le x :  P(a(p),a'(p))\in I\}= \mu_{\rm JST}(E_{I,P})\pi(x) + O\left( \delta^8 \dfrac{\pi(x)}{\mathcal M(x)^{1/2}}\right),$$ 
where 
\begin{equation}
    E_{I,P}:=\{(u,v)\in [-2,2]^2: P(u,v)\in I\}
\end{equation}
and $\mathcal{M}(x)$ is defined by \eqref{eq:mx}. Here the implied constant is positive, absolute, and effectively computable,  also, it is %It depends on the degree of the polynomial $P$, and is 
 independent of the interval $I$.
\end{theorem}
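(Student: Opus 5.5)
We will deduce the statement from Theorem~\ref{thm:main2}, applied to the region $E=E_{I,P}$. Then $\pi_{f,f',E}(x)$ is exactly the left-hand side, up to the $O(\omega(NN'))$ primes dividing $NN'$. These are absorbed into the error term, since $\omega(NN')\ll\log(NN')\ll \pi(x)/\mathcal M(x)^{1/2}$. What has to be done is to show that $E_{I,P}$ satisfies Hypothesis~\ref{hypothesis}, and to bound $L\alpha\beta\ll\delta^8$ uniformly in $I$.

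\textbf{Step 1: locating the boundary.} Write $I$ with endpoints $c\le d$; either may be infinite, and either may or may not belong to $I$. Since $P$ is continuous,
$$
\partial E_{I,P}\subseteq \{P=c\}\cup\{P=d\}\cup \partial([-2,2]^2),
$$
where $\{P=c\}$ means the zero set of $P-c$ in $[-2,2]^2$. Each of the polynomials $P-c$, $P-d$, $u\pm2$, $v\pm 2$ is non-zero of degree at most $\delta$, because $P$ is non-constant.

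Theorem~\ref{thm:main2} is stated with $\partial E$ equal to a union of curves. We therefore enlarge the set of curves to this finite union of algebraic pieces. In the proof of Theorem~\ref{thm:main2}, only a covering of $\partial E$ by curves is used: the error is controlled by the measure and count of a neighbourhood of $\partial E$. Hence a superset is harmless. Alternatively, one checks directly that the proof goes through with a covering of $\partial E$.

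\textbf{Step 2: decomposing into curves.} Each real algebraic curve $Z=\{Q=0\}\cap[-2,2]^2$ with $\deg Q\le\delta$ must be cut into continuous curves of finite length. We cut $Z$ at its singular points, at the points where $\partial_v Q=0$ (vertical tangencies), and at its intersections with the square's edges. By B\'ezout, the number of such points is $\ll\delta^2$.

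Vertical line components of $Z$ come from linear factors $u-a$ of $Q$. There are at most $\delta$ of them, and they satisfy alternative~(1) of the hypothesis. The remaining arcs are monotone graphs $v=\phi(u)$, which meet each vertical line at most once. By Harnack's theorem and the count above, $Z$ consists of $O(\delta^2)$ such arcs. Taking these arcs as the $\Gamma_t$, in the spirit of Lemma~\ref{lem:alg}, gives
$$
\alpha\ll\delta^2,\qquad \beta\le\delta
$$
(indeed $\beta=1$ for the graph pieces). This uniformity in $c$ and $d$ is the key point: nothing depends on $I$.

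\textbf{Step 3: bounding the length.} A graph arc is monotone in each coordinate once it is cut at points where $\partial_uQ=0$ as well, which is another $O(\delta^2)$ cuts. A monotone arc in $[-2,2]^2$ has length at most $8$. Hence
$$
L\ll\delta^2,
$$
via Cauchy--Crofton: a line meets $Z$ in at most $\delta$ points, so $\ell(Z)\le \pi\cdot 4\sqrt2\cdot\delta/2\ll\delta$ per curve.

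Combining the three bounds gives $L\alpha\beta\ll\delta^2\cdot\delta^2\cdot\delta\le\delta^8$ (the crude bounds leave room to spare). Theorem~\ref{thm:main2} then yields the claim, with an absolute implied constant independent of $I$.

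\textbf{Main obstacle.} The delicate step is Step 2: producing an explicit, $I$-uniform decomposition of the real algebraic curve into finitely many continuous arcs of finite length, with the count polynomial in $\delta$. I would handle it via B\'ezout applied to $Q$ together with $\partial_uQ$ and $\partial_vQ$. If $Q$ is not squarefree, I would first replace it by its squarefree part, which has the same zero set, so that these intersections are finite. This is presumably what the paper's Lemma~\ref{lemma: alpha} records.
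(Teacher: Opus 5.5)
Your proposal is correct in outline. It uses the same reduction as the paper: apply Theorem~\ref{thm:main2} to $E_{I,P}$, cover $\partial E_{I,P}$ by $\{P=c\}$, $\{P=d\}$ and the four edges, and bound $L,\alpha,\beta$ by the degree alone. Where you differ is the geometric lemma.

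\textbf{The paper's route.} Lemma~\ref{lemma: alpha} works with connected components of each level set:
\begin{itemize}
\item finiteness comes from semi-algebraic geometry, and continuity from the Hahn--Mazurkiewicz theorem;
\item $\alpha\ll\delta^3$ comes from Milnor's bound on the number of components plus B\'ezout with the edges;
\item $\beta=\delta$ comes from Lemma~\ref{lem:alg};
\item Cauchy--Crofton is applied to each component separately, so $L\ll\alpha\delta\ll\delta^4$.
\end{itemize}
The product is therefore $\delta^8$.

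\textbf{Your route.} You cut at the points of $\{Q=\partial_vQ=0\}$ and at the edges, so the pieces are graphs over $u$-intervals. This buys several things:
\begin{itemize}
\item $\beta=1$ and an explicit parametrisation come for free, with no need for Milnor or Hahn--Mazurkiewicz;
\item it avoids a point the paper handles only through its reduction to irreducible factors: a component of a reducible level set such as $\{uv=0\}$ contains a vertical segment plus other branches, so it satisfies neither alternative of Hypothesis~\ref{hypothesis};
\item applying Crofton to a whole level set at once, $\ell(Z)\le 2\sqrt{2}\pi\delta$, is sharper than the paper's componentwise use.
\end{itemize}
Done carefully, your route gives at most $\delta^4$ instead of $\delta^8$.

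\textbf{Details to repair.} None of these affects the result.
\begin{itemize}
\item Taking the squarefree part does not make $\{Q=\partial_vQ=0\}$ finite. You must also remove the factors of $Q$ lying in $\mathbb{R}[u]$; for example $Q=uv$ has $\partial_vQ=u$. Likewise remove factors in $\mathbb{R}[v]$ if you also cut at $\partial_uQ=0$, and set aside components lying on the edges (e.g.\ $Q=v-2$) before counting edge intersections.
\item Harnack's theorem does not bound the number of arcs, and neither does the count of $O(\delta^2)$ cut points, since many arcs can meet at one singular point. A correct count: let $c_1<\dots<c_K$ be the abscissae of the cut points. Every arc whose left end has abscissa $c_i$ lies over $(c_i,c_{i+1})$, where a vertical line meets $Z$ in at most $\delta$ points. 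Hence $\alpha\ll\delta\cdot K\ll\delta^3$, which with $\beta=1$ and the Crofton bound $L\ll\delta$ still gives $L\alpha\beta\ll\delta^4$.
\item The inequality $\omega(NN')\ll\pi(x)/\mathcal M(x)^{1/2}$ fails for small $x$. Instead, the discrepancy is at most $\min(\pi(x),\omega(NN'))$. This is trivially absorbed when $\mathcal M(x)\le 1$. When $\mathcal M(x)\ge1$ we have $\log x\ge(\log NN')^2$, and then $\omega(NN')\ll\sqrt{\log x}$ is negligible.
\end{itemize}
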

We remark that the exponent of $\delta$ in the error term is not optimal. Nevertheless, it shows the dependence of parameters $L,\alpha,$ and $\beta$ appearing in Theorem \ref{thm:main2} only on the degree of the underlying polynomial, and this is sufficient for the results obtained in this paper.

We now derive several arithmetic applications of Theorem~\ref{thm:sign change}. In each case, the arithmetic condition involving the symmetric power coefficients $a(p^m)$ and $a'(p^n)$ can be reformulated as the condition that the pair $(a(p),a'(p))$ lies in a suitable semi-algebraic subset of $[-2,2]^2$. Equivalently, it can be expressed as a polynomial inequality of the form
$$
P(a(p),a'(p))\in I,
$$
for an appropriate polynomial $P$ and interval $I$. This reformulation allows us to apply Theorem~\ref{thm:sign change} directly, yielding desirable effective asymptotic formulas together with explicit estimates for the corresponding densities.

Theorem~\ref{thm:sign change} immediately gives the following estimate, which may be viewed as a variant of the multiplicity one phenomenon.
\begin{corollary}\label{cor:density of zero} Let $f$ and $f'$ be two twist-inequivalent newforms. Let $P(u,v)\in \R[u,v]$ be a non-constant polynomial. For any $m,n\ge 0$ with $(m,n)\neq (0,0)$,
     $$\#\{p\le x : P(a(p^m), a'(p^n))=0\} = O\left( \dfrac{\pi(x)}{\mathcal M(x)^{1/2}}\right).
$$
\end{corollary}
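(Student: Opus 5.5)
The plan is to reduce the corollary to Theorem~\ref{thm:sign change} with the degenerate interval $I=\{0\}$. By \eqref{eq:hecke}, for $p\nmid NN'$ we have $a(p^m)=U_m(a(p)/2)$ and $a'(p^n)=U_n(a'(p)/2)$. Hence
$$
P(a(p^m),a'(p^n))=Q(a(p),a'(p)),\qquad Q(u,v):=P\bigl(U_m(u/2),U_n(v/2)\bigr)\in\R[u,v].
$$
The finitely many primes dividing $NN'$ contribute $O(\log(NN'))$. Since $\mathcal M(x)^{1/2}\le (\log x)^{1/4}$, this is absorbed in $\pi(x)/\mathcal M(x)^{1/2}$ for $x\ge 2$ with an absolute constant. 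Applying Theorem~\ref{thm:sign change} to $Q$ with $I=[0,0]$ gives
$$
\#\{p\le x: Q(a(p),a'(p))=0\}=\mu_{\rm JST}(Z_Q)\pi(x)+O\bigl(\delta_Q^8\,\pi(x)/\mathcal M(x)^{1/2}\bigr),
$$
where $Z_Q$ is the zero set of $Q$ in $[-2,2]^2$ and $\delta_Q\le \deg P\cdot\max(m,n)$ is a constant depending only on $P,m,n$. These constants are harmless, since the statement fixes $P,m,n$.

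Two facts remain to be checked: that $Q$ is non-constant, which Theorem~\ref{thm:sign change} requires, and that $\mu_{\rm JST}(Z_Q)=0$. The second is immediate once $Q\not\equiv 0$. The zero set of a non-zero polynomial in two variables has Lebesgue measure zero (by Fubini, each vertical slice is finite unless $u$ is a root of the leading coefficient in $v$), and $\mu_{\rm JST}$ is absolutely continuous with respect to Lebesgue measure.

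The main obstacle is therefore showing that $Q$ is non-constant; in particular $Q\not\equiv0$, and if $Q$ were a non-zero constant the zero set would be empty and the count would trivially vanish. Suppose $m\ge1$; the case $n\ge1$ is symmetric. The map $(u,v)\mapsto (U_m(u/2),U_n(v/2))$ is a polynomial map whose image contains a set with non-empty interior, or at least an infinite product set. Indeed, $U_m(u/2)$ is a non-constant polynomial in $u$ of degree $m$, so it takes infinitely many values on $[-2,2]$. If $n\ge 1$ the same holds for $v$. If $n=0$, then $U_0=1$ and the second argument is fixed, so $Q(u,v)=P(U_m(u/2),1)$. In that case non-constancy can fail, for example when $P(x,y)=y-1$.

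This case needs more care. I would interpret the hypothesis as requiring $P(U_m(\cdot/2),U_n(\cdot/2))$ to be non-constant, or equivalently assume that $P$ genuinely involves the variable(s) whose index is non-zero. When $m,n\ge1$, the image of $[-2,2]^2$ contains a product of two infinite sets, so a polynomial $P$ vanishing identically on it must be zero. Likewise, $Q$ constant would force $P$ to be constant on that product set, hence constant, which contradicts the hypothesis.

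When exactly one of $m,n$ is zero, I would either add the natural restriction that $P$ is not a function of the frozen variable alone, or treat the constant case separately. If $Q$ is a non-zero constant the count is $0$; if $Q\equiv0$ the statement fails, so this degenerate case must be excluded in the write-up. With that settled, the error term gives the claimed bound $O(\pi(x)/\mathcal M(x)^{1/2})$.
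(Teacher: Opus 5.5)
Your argument is the paper's: apply Theorem~\ref{thm:sign change} to $Q(u,v)=P(U_m(u/2),U_n(v/2))$ with $I=\{0\}$ and use that the zero set of a nonzero polynomial is $\mu_{\rm JST}$-null. Your caveat is correct and sharper than the paper's one-line proof, which silently assumes $Q$ is non-constant: for $n=0$ and $P(x,y)=y-1$ the count is all of $\pi(x)$, so the statement really needs $Q\not\equiv 0$, while for $m,n\ge1$ your product-set argument shows this holds automatically.

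One slip: $Q\not\equiv 0$ is not equivalent to $P$ ``genuinely involving'' the non-frozen variable. For example, $P(x,y)=x(y-1)$ with $n=0$ still gives $Q\equiv 0$. The correct extra hypothesis is $P(x,1)\not\equiv 0$ when $n=0$ (resp.\ $P(1,y)\not\equiv 0$ when $m=0$).
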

The proof follows by applying Theorem~\ref{thm:sign change} for the polynomial $P(U_m(u/2),U_n(v/2))$ and $I=\{0\}$ and using the fact that the zero set of a non-constant polynomial has Lebesgue measure zero (cf.~\cite[Remark~2.18]{ak}).

The above result recovers and generalises some of the classical multiplicity one results. Indeed, taking $P(u,v) = u-v$, and  $m=n=1$ yields \cite[Theorem 1]{murty17}, \cite[Theorem 2]{rajan17}, while the choice $m=n=2$  recovers \cite[Corollary of Theorem A]{ramakrishnan00}. 

We emphasize that the implied constant in Corollary~\ref{cor:density of zero} as well as in all subsequent results of this section, is positive, absolute, effectively computable, and it depends only on the integers $m,n$, and the degree of the polynomial $P(u,v)$.
\subsection{Simultaneous sign change of symmetric power coefficients}
In this subsection, we determine effective density results of primes for which the coefficients of two symmetric power $L$-functions, as well as their polynomial expressions, are simultaneously positive or simultaneously negative.  
To begin with, we state the following result, which provides an effective and explicit estimate for how often the symmetric power coefficients $a(p^m)$ and $a'(p^n)$ have the same or opposite signs.
\begin{theorem}\label{thm:simultaneous sign change of symmetric power coefficients}
Let $f$ and $f'$ be two twist-inequivalent newforms.  For integers $m,n\ge 0$ with $(m,n)\neq (0,0)$, we have
\begin{enumerate}
    \item 
$ \#\{p\le x :  a(p^m)a'(p^n)>0\}= d_{m,n}{\pi(x)} + O\left( \dfrac{\pi(x)}{\mathcal M(x)^{1/2}}\right)$,
        \item 
        $ \#\{p\le x :  a(p^m)a'(p^n)<0\}= (1-d_{m,n})\pi(x) + O\left( \dfrac{\pi(x)}{\mathcal M(x)^{1/2}}\right)$,
        \end{enumerate}
        where
  \begin{equation}\label{eq:dmn}
    \displaystyle{d_{m,n} = \begin{cases}
                    \frac{1}{2}, & \text{if } m \text{ or } n \text{ is odd},\\
                    \frac{1}{2}+\frac{1}{2\pi^2} 
                    \left( \tan\left(\frac{\pi}{m+1}\right) -\frac{\pi}{m+1} \right) \left(\tan\left(\frac{\pi}{n+1}\right) -\frac{\pi}{n+1} \right) , & \text{otherwise}.                    
                \end{cases}}
  \end{equation}
\end{theorem}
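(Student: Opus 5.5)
Apply Theorem~\ref{thm:sign change} with the polynomial $P(u,v)=U_m(u/2)U_n(v/2)$, which has degree $\delta=m+n\ge1$ since $(m,n)\neq(0,0)$. Take $I=(0,\infty)$ for part (1) and $I=(-\infty,0)$ for part (2). For $p\nmid NN'$, \eqref{eq:hecke} gives $a(p^m)a'(p^n)=P(a(p),a'(p))$. The finitely many primes dividing $NN'$ contribute $O(\log NN')$, and this is absorbed into the error term. Thus each count equals $\mu_{\rm JST}(E_{I,P})\pi(x)+O(\pi(x)/\mathcal M(x)^{1/2})$, with an implied constant depending only on $m,n$. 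By Corollary~\ref{cor:density of zero}, the zero set of $P$ has $\mu_{\rm JST}$-measure zero, so the two densities sum to $1$. It therefore suffices to show $\mu_{\rm JST}(\{P>0\})=d_{m,n}$.

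\textbf{Computing the density.} Substitute $u=2\cos\theta$ and $v=2\cos\phi$. Then $\mu_{\rm JST}$ becomes $\frac{4}{\pi^2}\sin^2\theta\sin^2\phi\,d\theta\,d\phi$ on $[0,\pi]^2$. Moreover $U_m(\cos\theta)$ has the same sign as $\sin((m+1)\theta)$, since $\sin\theta>0$ on $(0,\pi)$. Write $\mu=\frac{2}{\pi}\sin^2\theta\,d\theta$ and set
\[
p_m=\mu(\{\sin((m+1)\theta)>0\}),\qquad q_m=\mu(\{\sin((m+1)\theta)<0\}),
\]
so that $p_m+q_m=1$. By the product structure, the density equals $p_mp_n+q_mq_n=\tfrac12+\tfrac12(p_m-q_m)(p_n-q_n)$.

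\textbf{Odd index.} If $m$ is odd, the substitution $\theta\mapsto\pi-\theta$ preserves $\sin^2\theta$ and sends $\sin((m+1)\theta)$ to $-\sin((m+1)\theta)$, because $m+1$ is even. Hence $p_m=q_m$, and the density is $1/2$. The same argument applies if $n$ is odd.

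\textbf{Even index.} If $m$ is even, I will compute
\[
p_m-q_m=\frac{2}{\pi}\int_0^\pi \operatorname{sgn}(\sin((m+1)\theta))\sin^2\theta\,d\theta .
\]
Using $\sin^2\theta=\tfrac12(1-\cos2\theta)$, the constant part contributes $\frac1\pi\cdot\frac{\pi}{m+1}$. This is because there are $m+1$ arcs of length $\pi/(m+1)$ with alternating signs, starting and ending positive, so the net length is $\pi/(m+1)$. The cosine part is an alternating sum $-\frac{1}{2\pi}\sum_{j=0}^{m}(-1)^j\bigl[\sin(2\theta)\bigr]_{j\pi/(m+1)}^{(j+1)\pi/(m+1)}$. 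This telescopes into a sum of $\sin(2j\pi/(m+1))$ terms weighted by $\pm1$, which is a geometric-type sum. I expect it to evaluate to a multiple of $\tan(\pi/(m+1))$, giving $p_m-q_m=\pm\frac1\pi\bigl(\tan\frac{\pi}{m+1}-\frac{\pi}{m+1}\bigr)$.

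The main obstacle is this trigonometric evaluation: summing $\sum_j(-1)^j\sin(2j\pi/(m+1))$ in closed form as a tangent, and fixing the sign. I would handle it by writing the sum as the imaginary part of $\sum_j(-e^{2\pi i/(m+1)})^j$, and I would check the sign in the case $m=2$. Since the same sign appears for both $m$ and $n$, the product is positive, and substituting into $\tfrac12+\tfrac12(p_m-q_m)(p_n-q_n)$ gives exactly \eqref{eq:dmn}.
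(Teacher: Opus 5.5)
Your proposal is correct and follows the paper's proof: apply Theorem~\ref{thm:sign change} to $U_m(u/2)U_n(v/2)$ with the two half-lines, pass to $(\theta,\phi)$, and factor the density as $d_md_n+(1-d_m)(1-d_n)=\tfrac12+\tfrac12(2d_m-1)(2d_n-1)$. The only difference is that the paper quotes the one-variable density $d_\ell$ from \cite[Theorem~1.1]{jaban}, whereas you derive it yourself; your geometric-sum plan works, giving $\sum_{k=1}^{m}(-1)^k\sin\frac{2k\pi}{m+1}=-\tan\frac{\pi}{m+1}$ and hence $p_m-q_m=-\frac{1}{\pi}\bigl(\tan\frac{\pi}{m+1}-\frac{\pi}{m+1}\bigr)$ for even $m$. (A minor point: Corollary~\ref{cor:density of zero} is proved using the measure-zero fact, so it is cleaner to say directly that the zero set of $U_m(u/2)U_n(v/2)$ is a finite union of horizontal and vertical lines.)
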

Theorem~\ref{thm:simultaneous sign change of symmetric power coefficients}
recovers and extends several earlier results on sign changes of symmetric power
coefficients including
\cite[Theorem~1.5]{kumari} when $m=n=1$,
\cite[Theorem~1.1]{jaban} when $n=0$, and
\cite[Theorem~1.3]{amri} when $m=n$ is odd. In each case, our result provides an explicit and effective error term.
\begin{remark}\label{remark:positive}
If both $m$ and $n$ are even, then
$ d_{m,n} > \frac{1}{2},$ 
as $\frac{\pi}{\ell+1} < \tan\!\left(\frac{\pi}{\ell+1}\right)$ for  any even integer $\ell \ge 2$. Thus, in this situation, the coefficients $ a(p^m)a'(p^n)$ are positive more often than negative.
Furthermore, the Taylor expansion of $\tan x$ around $x=0$
gives
$$
d_{m,n}
= \frac{1}{2}
+ \frac{\pi^4}{18}\frac{1}{(m+1)^3 (n+1)^3}
+ o\!\left((m+1)^{-3}(n+1)^{-3}\right)
$$
which shows that $d_{m,n}\rightarrow \frac{1}{2}$ as $m,n \rightarrow \infty$ through even integers.
 \end{remark}   

Theorem~\ref{thm:simultaneous sign change of symmetric power coefficients}
determines the asymptotic distribution of the sign of the product
$a(p^m)a'(p^n)$, corresponding to the polynomial
$P(u,v)=uv.$
The next theorem identifies a general symmetry principle governing the sign distribution of polynomial expressions in symmetric power coefficients. It shows that, whenever $P(u,v)$ satisfies one of the symmetry conditions below, the sets of primes for which
$P(a(p^m),a'(p^n))$ is positive and negative both have natural density $1/2$, with an effective error term. 
\begin{theorem}\label{cor:simultaneous}
Let $f$ and $f'$ be two twist-inequivalent newforms.  Let $P(u,v)\in \R[u,v]$ be a non-zero polynomial, and $m,n$ be non-negative integers with $(m,n)\neq (0,0)$. Further, assume that either of the following two conditions holds.
\begin{enumerate}
    \item 
   Either $m$ or $n$ is odd and 
   $
   P(su,tv)=-P(u,v), {{~for~ some~}} s\in\{(\pm 1)^m\},\, t\in\{(\pm 1)^n\}.
   $
   \item 
   $m=n$ and 
   $
   P(u,v)=-P(v,u).
   $
\end{enumerate}
Then, we have   
$$
\#\{p\le x : P(a(p^m),a'(p^n))\lessgtr 0\}= \frac{\pi(x)}{2} + O\left( \dfrac{\pi(x)}{\mathcal M(x)^{1/2}}\right).$$ 
\end{theorem}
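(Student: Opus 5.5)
Set $Q(u,v):=P\bigl(U_m(u/2),U_n(v/2)\bigr)\in\R[u,v]$, so that $P(a(p^m),a'(p^n))=Q(a(p),a'(p))$ for $p\nmid NN'$ by \eqref{eq:hecke}. The finitely many primes dividing $NN'$ only contribute $O(\log(NN'))$, which is absorbed in the error term. We may assume $Q$ is non-constant. Indeed, if $Q$ were constant, the symmetry hypotheses below would force $Q\equiv -Q$, i.e.\ $Q\equiv0$. In that case $P$ vanishes on the image of $(U_m(u/2),U_n(v/2))$, which contains an open set whenever $m,n\ge1$, and then $P=0$, contradicting the hypothesis. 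When one of $m,n$ is $0$, the degenerate case is handled in the same way.

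Applying Theorem~\ref{thm:sign change} to $Q$ with $I=(0,\infty)$ and with $I=(-\infty,0)$ gives
$$
\#\{p\le x: P(a(p^m),a'(p^n))\gtrless0\}=\mu_{\rm JST}(E^{\pm})\pi(x)+O\!\left(\frac{\pi(x)}{\mathcal M(x)^{1/2}}\right),
$$
where $E^{\pm}=\{(u,v)\in[-2,2]^2:\pm Q(u,v)>0\}$. The implied constant depends only on $\deg Q\le \deg P\cdot\max(m,n)$. It therefore suffices to show $\mu_{\rm JST}(E^+)=\mu_{\rm JST}(E^-)=1/2$.

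Since $Q$ is non-constant, its zero set $Z$ has Lebesgue measure zero, and hence $\mu_{\rm JST}(Z)=0$. Thus $\mu_{\rm JST}(E^+)+\mu_{\rm JST}(E^-)=1$, and it remains to exhibit a $\mu_{\rm JST}$-preserving involution $\phi$ of $[-2,2]^2$ with $\phi(E^+)=E^-$.

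In case (1), take $\phi(u,v)=(su,tv)$. This preserves $\mu_{\rm JST}$ because the density $\sqrt{1-u^2/4}\sqrt{1-v^2/4}$ is even in each variable. We use the parity relation $U_m(-x)=(-1)^mU_m(x)$. Consequently $U_m(su/2)=s'U_m(u/2)$ with $s'=s^m$, which gives
$$
Q(\phi(u,v))=P\bigl(s^mU_m(u/2),\,t^nU_n(v/2)\bigr).
$$
The hypothesis should be read as saying that the sign flips $(s,t)\in\{(\pm1)^m\}\times\{(\pm1)^n\}$ are precisely the sign changes realised on $(U_m,U_n)$ by $u\mapsto \pm u$ and $v\mapsto\pm v$. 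Under that reading, $P(s\,\cdot,t\,\cdot)=-P$ yields $Q\circ\phi=-Q$.

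In case (2), take $\phi(u,v)=(v,u)$. This preserves $\mu_{\rm JST}$ because the density is symmetric in $u$ and $v$, and since $m=n$ we get $Q(v,u)=P(U_m(v/2),U_m(u/2))=-Q(u,v)$.

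In both cases $\phi(E^+)=E^-$, so the two measures are equal, each equals $1/2$, and the theorem follows. The main point requiring care is the bookkeeping in case (1): matching the admissible $(s,t)$ with the parity of $U_m$ and $U_n$. For example, when $m$ is even the only admissible choice is $s=1$, and then $t=-1$ with $n$ odd must do the work. The measure-zero fact for $Z$ is the one already used for Corollary~\ref{cor:density of zero}.
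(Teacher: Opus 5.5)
Your argument is the same as the paper's. You apply Theorem~\ref{thm:sign change} to $Q(u,v)=P(U_m(u/2),U_n(v/2))$ with $I=(0,\infty)$ and $I=(-\infty,0)$, and then use a $\mu_{\rm JST}$-preserving involution that exchanges $E^+$ and $E^-$. In case (1), your $\phi(u,v)=(su,tv)$ is the paper's $\sigma(u,v)=(\alpha u,\beta v)$ with $\alpha=s$, $\beta=t$. No special reading of the hypothesis is needed: every $s\in\{(\pm1)^m\}$ satisfies $s^m=s$ (if $m$ is even then $s=1$), and likewise $t^n=t$. Hence $U_m(su/2)=sU_m(u/2)$, and $Q\circ\phi=-Q$ follows directly from $P(su,tv)=-P(u,v)$.

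The step that fails is your claim that the case $m=0$ or $n=0$ is ``handled in the same way''. There the image of $(U_m(u/2),U_n(v/2))$ lies in a line on which one coordinate equals $1$, so $Q\equiv 0$ no longer forces $P=0$. In fact the statement itself is false in that case. Take $m=0$, $n=1$, $P(u,v)=v(u-1)$. Hypothesis (1) holds with $s=1$, $t=-1$, and $P\neq 0$, yet $P(a(1),a'(p))=P(1,a'(p))=0$ for every $p$. So both counts are $0$, not $\pi(x)/2+O(\pi(x)/\mathcal M(x)^{1/2})$. Here $Q\equiv 0$, so Theorem~\ref{thm:sign change}, which needs a non-constant polynomial, does not apply, and the zero set has full measure.

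The paper's proof has the same blind spot. It applies Theorem~\ref{thm:sign change} to $P(U_m(u/2),U_n(v/2))$ and asserts that $E_+$ and $E_-$ are complementary up to a null set, without checking non-constancy. Your proof, like the paper's, is therefore valid exactly under the extra assumption $Q\not\equiv 0$. As you correctly show, this holds automatically when $m,n\ge 1$, so in particular in all of case (2). For $m=0$ (resp.\ $n=0$) it is equivalent to $(u-1)\nmid P$ (resp.\ $(v-1)\nmid P$). This condition must be added to the statement; it cannot be proved from the given hypotheses.
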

The symmetry assumptions in Theorem~\ref{cor:simultaneous} are satisfied by many natural families of polynomials. For example,
\begin{itemize}
    \item If $m$ is odd, then 
    $P(u,v)=uQ(u^2,v)$ for any polynomial  $Q(u,v)$.
   \item If $n$ is odd, then 
  $P(u,v)=vQ(u,v^2)$ for any polynomial  $Q(u,v)$.
  \item If $m\!=\!n$, then  $P(u,v)\!=\!(u-v)Q(u,v)$ for any symmetric polynomial $Q(u,v)$.
\end{itemize}

An immediate consequence of Theorem~\ref{cor:simultaneous} is an effective comparison between symmetric power coefficients. Indeed, taking
$P(u,v)=u-v$
gives the following result, which estimates the number of dominating coefficients.
\begin{cor}\label{thm:compare}
Let $f$ and $f'$ be two twist-inequivalent newforms.  Let $m,n$ be non-negative integers such that $(m,n)\neq (0,0)$. If $m=n$ or both are odd, then we have
$$
\#\{p\le x: a(p^m) < a'(p^n)\}= \frac{\pi(x)}{2} + O\left(\frac{\pi(x)}{\mathcal M(x)^{1/2}}\right).$$
\end{cor}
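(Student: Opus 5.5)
The plan is to derive Corollary~\ref{thm:compare} from Theorem~\ref{cor:simultaneous} with $P(u,v)=u-v$, using Corollary~\ref{cor:density of zero} to dispose of equality. The polynomial $P(u,v)=u-v$ is non-zero, and $P(a(p^m),a'(p^n))<0$ is exactly the condition $a(p^m)<a'(p^n)$. So the whole task is to check that, under each of the two alternatives in the corollary, one of the symmetry conditions of Theorem~\ref{cor:simultaneous} holds. Once that is done, the ``$<0$'' case of that theorem gives the claimed asymptotic.

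\textbf{Case $m=n$.} Here condition (2) of Theorem~\ref{cor:simultaneous} applies directly, since
$$P(v,u)=v-u=-(u-v)=-P(u,v).$$
The hypothesis $(m,n)\neq(0,0)$ is inherited from the corollary.

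\textbf{Case $m,n$ both odd.} Here condition (1) applies. Since $m$ is odd, the set $\{(\pm1)^m\}$ is $\{1,-1\}$, so we may choose $s=-1$; similarly we may choose $t=-1$. Then
$$P(-u,-v)=-u+v=-P(u,v),$$
which is exactly the required antisymmetry.

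In both cases Theorem~\ref{cor:simultaneous} yields
$$\#\{p\le x: a(p^m)-a'(p^n)<0\}=\frac{\pi(x)}{2}+O\!\left(\frac{\pi(x)}{\mathcal M(x)^{1/2}}\right),$$
which is the assertion of the corollary.

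The only potential subtlety is the set of primes where $a(p^m)=a'(p^n)$. Theorem~\ref{cor:simultaneous} already handles the strict inequality, so this set does not affect the count. For completeness, Corollary~\ref{cor:density of zero} applied to $P(u,v)=u-v$ shows that these primes number $O(\pi(x)/\mathcal M(x)^{1/2})$. This confirms that the ``$<$'' and ``$>$'' counts are each $\pi(x)/2$ up to the stated error. There is no real obstacle beyond checking the symmetry conditions, which is immediate.
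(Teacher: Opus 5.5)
Your proposal is correct and is exactly the paper's argument: the paper obtains the corollary as an immediate consequence of Theorem~\ref{cor:simultaneous} with $P(u,v)=u-v$. You verify condition (2) when $m=n$, and condition (1) with $s=t=-1$ when $m,n$ are both odd; the appeal to Corollary~\ref{cor:density of zero} for the equality primes is unnecessary but harmless.
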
 
Corollary~\ref{thm:compare} extends \cite[Proposition~2.1]{chiriac2} from Fourier coefficients of newforms to symmetric power coefficients, while also providing an explicit effective error term.

\subsection{First simultaneous sign change}
Theorem~\ref{thm:simultaneous sign change of symmetric power coefficients}
shows that the coefficients $a(p^m)$ and $a'(p^n)$ have opposite signs for a positive proportion of primes. A natural quantitative question is how soon such a prime must occur. In this subsection, we obtain an unconditional upper bound for the least prime with this property.

For a normalised non-CM newform $f\in S_k(N)$ and an integer $m\ge1$, define
$$
n_f:=\min\{n\ge1:a(n)<0\},
\qquad
p_{f,m}:=\min\{p:a(p^m)<0\}.
$$
The problem of finding a suitable upper bound for $n_f$ and $p_{f,m}$  has received considerable attention. Using convexity bounds for automorphic $L$-functions, Iwaniec, Kohnen, and Sengupta \cite{Iwaniec-Kohnen-Sengupta} proved that
$$
n_f\ll (k^2N)^{29/60}.
$$
This exponent was subsequently improved to $9/20$ by Kowalski, Lau, Soundararajan, and Wu \cite[Theorem~1]{kowalski-lau-soundararajan-wu}, and later to $3/8$ by Matom\"aki \cite[Theorem~1]{matomaki1}. In each case, the implied constant is absolute.

For prime power coefficients, Kowalski, Lau, Soundararajan, and Wu
\cite[Theorem~2]{kowalski-lau-soundararajan-wu} proved that, if the level $N$ is squarefree, then
$$
p_{f,m}\ll\log(kN),
$$
except possibly for a family of newforms of cardinality
$$
\ll
(kN)
\exp\!\left(
-\frac{c\log(kN)}{\log\log(kN)}
\right),
$$
where $c>0$ is an absolute constant. A conditional bound for $p_{f,1}$ was later obtained by Xu \cite[Theorem~1.2]{zhao}.

We now consider the simultaneous analogue of this problem. Let
$f'\in S_{k'}(N')$ be another non-CM newform, and let
$m,n\ge0$ with $(m,n)\neq(0,0)$. Define
$$
p_{f,f',m,n}
:=
\min\{p:a(p^m)a'(p^n)<0\}.
$$
Our next theorem gives the first unconditional upper bound for
$p_{f,f',m,n}$.

\begin{theorem}\label{thm:first sign change}
Let $f$ and $f'$ be two twist-inequivalent newforms. Then there exists a constant $C>0$ such that
$$
p_{f,f',m,n}
\ll
\exp\!\left(
C(\log kk'NN')^2
\right).
$$
The implied constant is absolute and depends only on $m$ and $n$.
\end{theorem}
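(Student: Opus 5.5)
The plan is to deduce the bound from the effective count in Theorem~\ref{thm:simultaneous sign change of symmetric power coefficients}(2). That result gives
$$
\#\{p\le x : a(p^m)a'(p^n)<0\}=(1-d_{m,n})\pi(x)+O\!\left(\frac{\pi(x)}{\mathcal M(x)^{1/2}}\right),
$$
with an implied constant $c_{m,n}$ depending only on $m,n$.

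We first check that the main term has positive density. If $m$ or $n$ is odd, then $1-d_{m,n}=1/2$. If both are even, $d_{m,n}<1$, because each factor $\tan(\pi/(\ell+1))-\pi/(\ell+1)$ is bounded; for instance $\ell=2$ gives $\sqrt3-\pi/3\approx 0.68$. Hence the product of two such factors, divided by $2\pi^2$, is far below $1/2$. For $\ell=0$ (the case $n=0$) the factor is $\tan\pi-\pi=-\pi$, but then $a'(p^0)=1$, so the condition reduces to $a(p^m)<0$. Either way $1-d_{m,n}\ge \eta_{m,n}>0$. This lower bound should be verified directly from \eqref{eq:dmn} in every case, including when one index vanishes.

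It then suffices to find $x$ such that $c_{m,n}\mathcal M(x)^{-1/2}<\eta_{m,n}/2$, since for such $x$ the count is at least $\tfrac{\eta_{m,n}}2\pi(x)>0$. That yields a prime $p\le x$ with $a(p^m)a'(p^n)<0$, and so $p_{f,f',m,n}\le x$. We also need to make sure primes dividing $NN'$ are excluded or harmless; the count in the theorem concerns primes with $p\nmid NN'$ implicitly, and this only helps.

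The key step is to solve the inequality $\mathcal M(x)\ge K$, where $K=(2c_{m,n}/\eta_{m,n})^2$ depends only on $m,n$. Write $Q=kk'NN'$ and $y=\log x$. We need
$$
\sqrt{y}\ \ge\ K^2\log(Q\,y)=K^2(\log Q+\log y).
$$
Take $y=C(\log Q)^2$ with $C$ a large constant depending on $m,n$. Then $\sqrt y=\sqrt C\log Q$. On the other side, $\log y=\log C+2\log\log Q\ll_C \log Q$ once $Q\ge 3$; the finitely many small $Q$ are absorbed into the implied constant. So the inequality holds as soon as $\sqrt C\ge 2K^2+O(1)$, and this gives $x=\exp(C(\log kk'NN')^2)$.

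The main obstacle is purely bookkeeping: confirming that the constant in Theorem~\ref{thm:simultaneous sign change of symmetric power coefficients} is uniform in $f,f'$ and depends only on $m,n$, and that the error term is valid for all $x\ge 2$. The latter lets us choose $x$ as a function of the level and weight without any hidden lower threshold on $x$ that itself depends on $Q$. If such a threshold existed, it would be at most of size $\exp(O((\log Q)^2))$ as well, so the final bound $p_{f,f',m,n}\ll\exp\!\left(C(\log kk'NN')^2\right)$ would be unaffected.
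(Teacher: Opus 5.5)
Your proposal is correct and follows essentially the same route as the paper: you use part (2) of Theorem~\ref{thm:simultaneous sign change of symmetric power coefficients} to make the count of primes with $a(p^m)a'(p^n)<0$ positive, reduce this to the inequality $\sqrt{\log x}>A\log(kk'NN'\log x)$ for a constant $A$ depending only on $m,n$, and solve it with $\log x\asymp_{m,n}(\log kk'NN')^2$. Your explicit check that $1-d_{m,n}$ is bounded away from $0$ (including the cases where one index vanishes), and your remark about primes dividing $NN'$, fill in details the paper leaves implicit.
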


Several authors (for example, \cite{gun3,hua}) have studied the least integer $\nu$ such that 
$a(\nu^m)a'(\nu^n)<0$. To the best of our knowledge, however, Theorem~\ref{thm:first sign change} is the first result giving an upper bound for the least \emph{prime}
$p$ such that
$a(p^m)a'(p^n)<0$.

Although the bound in Theorem~\ref{thm:first sign change} is unlikely to be optimal, it is completely unconditional. Under the Generalised Riemann Hypothesis, combining the proof of Theorem~\ref{thm:first sign change} with the effective joint Sato--Tate estimate stated in Remark~\ref{remark: under GRH} yields
$$
p_{f,f',m,n}
\ll
(\log(kk'NN'))^{2+\varepsilon},
\qquad
\text{for every }\varepsilon>0,
$$
where the implied constant is absolute and depends only on
$m$, $n$, and $\varepsilon$.

\section{Proof of \Cref{thm:main}}
We begin the proof by approximating the region $E$ by a grid of small rectangular boxes. For that, 
let $m$ be a large positive integer (to be chosen later) and $\tau$ be a transcendental number such that $1< \tau < 1+\frac{1}{2m}$. 
Define horizontal and vertical line segments 
$$
 h_i: \left\{\left(u, -2 +  \frac{4\tau i}{m}\right): -2\le u\le 2\right\}  , \quad  v_i: \left\{\left(-2 +\frac{4\tau i}{m},v\right):-2\le v\le 2\right\}, \quad 0\le i\le m-1,
$$ 
 respectively together with 
 $$h_m: \{(u, 2): -2\le u\le 2\} , \quad v_m:\{(2, v): -2\le v\le 2\}.$$
 These $m+1$ horizontal and $m+1$ vertical lines  partition the square $[-2,2]^2$ into $m^2$ closed  boxes $B_{ij}$, where $B_{ij}$ is the region bounded by the lines $h_i, h_{i+1}, v_j$ and $v_{j+1}$, for $0\le i,j\le m-1$. 

Observe that for any prime $p$, the pair $(a(p),a'(p))$ lies in a unique box $B_{ij}$. To see this, suppose $(a(p),a'(p))$ lies in two different boxes. Then it necessarily lies on one of the lines $h_i$ or $v_i$, for some $1\le i\le m-1$. This, however, is not possible because $a(p)$ and $a'(p)$ are algebraic integers, whereas on each of the lines $h_i$ and $v_i$, at least one of the coordinates is transcendental.
We now define the sets
$$
         \mathcal{S} = \bigcup\limits_{B_{ij}\subset E^{\circ}} B_{ij}  
         \quad {\rm and}\quad 
          \mathcal{T} = \bigcup\limits_{B_{ij}\cap \partial {E}\neq \varnothing} B_{ij}. 
$$

Let $(a(p), a'(p))\in E$ and $B_{ij}$ be the unique box containing that pair. We claim that
\begin{enumerate}
    \item 
    $(a(p),a'(p))\in \mathcal{S}\cup (E\cap \mathcal{T})$, and
    \item 
    $ (a(p),a'(p))\not\in \mathcal{S}\cap (E\cap \mathcal{T})$.
\end{enumerate}

To prove the first part, let $(a(p),a'(p))\not \in\mathcal{S}\cup (E\cap \mathcal{T})$. Then $B_{ij}\not\subset E^{\circ}$ and $B_{ij}\cap\partial E= \varnothing$. Since $B_{ij}\cap E\neq \varnothing$, we obtain that  
$$
        B_{ij}  =  (B_{ij}\cap E^{\circ}) \cup (B_{ij}\cap(\overline{E})^c),
$$
giving a disjoint union of non-empty open sets of $B_{ij}$, which is not possible because $B_{ij}$ is connected.
To prove the second part, let $(a(p),a'(p))\in \mathcal{S}\cap (E\cap \mathcal{T})$. Then 
$B_{ij}\subset E^{\circ}$ and $B_{ij}\cap\partial E\neq \varnothing$, simultaneously, which is absurd.

Thus, we have
\begin{equation}\label{first step}
    \pi_{f,f',E }(x) =  \pi_{f,f',\mathcal{S} }(x) +  \pi_{f,f',E \cap \mathcal{T} }(x).
\end{equation}
We first analyse the term $\pi_{f,f',E \cap \mathcal{T} }(x)$. 
Since $\pi_{f,f',E \cap \mathcal{T} }(x) \le \, \pi_{f,f', \mathcal{T} }(x) $,  applying the effective Sato--Tate estimate \eqref{lemma 1.2 analogue} to each of boxes gives
\begin{equation*}
    \pi_{f,f',E \cap \mathcal{T} }(x) 
     \le  \sum\limits_{B_{ij}\subset \mathcal{T}}\left( \mu_{\rm JST}(B_{ij})\pi(x) +  O\left(\frac{\pi(x)}{\mathcal{M}(x)} \right) \right).
\end{equation*}
As each box $B_{ij}$ has Lebesgue measure at most  $\frac{16\tau^2}{m^2}$, it follows that $\mu_{\rm JST}(B_{ij}) \le \frac{16 \tau^2}{m^2}$ and hence 
\begin{equation}\label{eq:estimation of T}
    \pi_{f,f',E \cap \mathcal{T} }(x) 
     \le  \left(\sum\limits_{B_{ij}\subset \mathcal{T}} 1 \right)\left( \frac{16 \tau^2}{m^2} \pi(x)+ O\left(\frac{\pi(x)}{\mathcal{M}(x)}\right)\right).
\end{equation}
It therefore suffices to bound the number of boxes intersecting the boundary $\partial E$. We claim that 
\begin{equation}\label{eq:N1}
    \mathcal N:= \sum\limits_{B_{ij}\subset \mathcal{T}} 1=O(Lm\alpha).
\end{equation}
To prove this, we note that each box $B_{ij}$ has side lengths at most
$\frac{4\tau}{m},$ and hence diameter at most 
\begin{equation}\label{eq:delta}
\delta= \frac{4\sqrt{2} \tau}{m}.
\end{equation}
If $B_{ij}\subset \mathcal T$, then  $B_{ij}\cap\partial E\neq\varnothing$, and therefore $B_{ij}$ is contained in the
$\delta$-neighborhood of $\partial E$
$$
(\partial E)_{\delta}
:=\{(u,v)\in\R^2:{\rm dist}((u,v),\partial E)\le\delta\}.
$$
Consequently, the union of all boxes contained in $\mathcal T$ is also contained in
$(\partial E)_{\delta}$, and therefore
$$
\mathrm{Area}(\mathcal T) \le \mathrm{Area}((\partial E)_{\delta}).
$$ 
Moreover, each box has side length at least $2/m$ and so the area of $\mathcal T$ is at least $4 \mathcal N/m^2$, giving
\begin{equation}\label{eq:N}
    \mathcal N \le \frac{m^2}{4}\mathrm{Area}((\partial E)_{\delta}).
\end{equation}
Note that $\partial E = \Gamma_1\cup\cdots \cup \Gamma_{\alpha}$ and $\Gamma_t$ is a curve of length $L_t$ (say) for each $1\le t\le \alpha$.
For a fixed $t$, choose an integer $r$ such that 
$$\frac{L_t}{\delta} \le r <  \frac{L_t}{\delta}+1.$$
Let $\gamma_t:[0,1]\rightarrow \R^2$ be a parametrization of $\Gamma_t$. Then there exists a partition $\{u_0=0<u_1<\dots<u_{r-1} <u_r=1\}$ of $[0,1]$  such that the length of the arcs joining $\gamma_t(u_{i-1})$ and $\gamma_t(u_i)$ equals $\frac{L_t}{r}$. Then it is easy to see that
$$
        (\Gamma_t)_{\delta} \subset \bigcup\limits_{i=0}^{r} B_{2\delta}(\gamma_t(u_i)),
$$
where $B_{2\delta}(\gamma_t(u_i))$  is  a closed ball of radius $2\delta$ centered at $\gamma_t(u_i)$. Thus, it follows that 
$$
\mathrm{Area}((\Gamma_t)_{\delta}) \le 4\pi(r+1)\delta^2 \le 4\pi L_t \delta +8\pi \delta^2.
$$
Summing over $1\le t\le \alpha$, we obtain
$$
\mathrm{Area}((\partial E)_{\delta}) \le  4\pi L \delta  + 8\pi \alpha \delta^2.
$$
Using this in \eqref{eq:N} and substituting $\delta=\frac{4\sqrt 2 \tau}{m}$ give $\mathcal N=O( Lm \alpha)$ proving \eqref{eq:N1}.

Therefore, from \eqref{eq:estimation of T} it follows that
\begin{equation}\label{second term estimation}
\pi_{f,f',E \cap \mathcal{T} }(x) 
     = O\left( \frac{ L \alpha }{m}\pi(x) +  L  m\alpha  \frac{\pi(x)  }{\mathcal{M}(x)}\right).
    \end{equation}
It remains to estimate $\pi_{f,f', \mathcal{S}}(x)$ appearing in $\eqref{first step}$, which we analyse now.
From the definition of $\mathcal{S}$, we write
$$
        \pi_{f,f',\mathcal{S}}(x) = \sum\limits_{B_{ij}\subset E^{\circ}} \pi_{f,f',B_{ij}}(x).
$$
Applying the effective Sato--Tate estimate \eqref{lemma 1.2 analogue} to each box and noting that $\mathcal S$ contains at most $m^2$ boxes, we obtain
\begin{equation}\label{eq: first term estimation part(i)}
      \pi_{f,f',\mathcal{S}}(x) =\mu_{\rm JST}(\mathcal{S}) \pi(x) +  O\left(   m^2\frac{\pi(x)}{\mathcal{M}(x)}\right).  
\end{equation}
Putting \eqref{second term estimation} and \eqref{eq: first term estimation part(i)}   in \eqref{first step}, 
gives
$$
        \pi_{f,f',E }(x) = \mu_{\rm  JST}(\mathcal{S})\pi(x) + O\left( m^2 \frac{\pi(x)}{\mathcal{M}(x)}\right) +  O\left( \frac{ L\alpha}{m}\pi(x)\right) +  O\left(  L m \alpha \frac{\pi(x)}{\mathcal{M}(x)}\right).
$$
Choosing $$m = \left\lfloor \mathcal{M}(x)^{1/3}\right\rfloor + 1,$$ 
where $\lfloor \cdot \rfloor$ is the standard floor function, we obtain
$$
        \pi_{f,f',E }(x) =\mu_{\rm JST}(\mathcal S) \pi(x) +  O\left( L\alpha\frac{ \pi(x)    }{\mathcal{M}(x)^{1/3}}\right).
$$
Note that $m\rightarrow \infty$ as $x\rightarrow \infty$, and so from \eqref{eq:N1} and \eqref{eq:delta}, we obtain that $\mu_{\rm JST}(\mathcal T)\rightarrow 0$ as $x \rightarrow \infty$. Furthermore, because
$\mathcal S\subset E^{\circ}\subset \mathcal S \cup \mathcal T $,  therefore
$$\mu_{\rm JST}(\mathcal S)\rightarrow \mu_{\rm JST}(E),\quad  {\rm as}~ x \rightarrow \infty. $$
Hence, for any sufficiently large $x$
$$
        \pi_{f,f',E }(x) =\mu_{\rm JST}(E) \pi(x) +  O\left( L\alpha\frac{ \pi(x)    }{\mathcal{M}(x)^{1/3}}\right).
$$
This completes the proof of Theorem \ref{thm:main}.
\section{Proof of \Cref{thm:main2}}
We follow the notation of the proof of Theorem \ref{thm:main}. As before, for any fixed $m$, the lines $h_i$ and $v_i$, for $0\le i\le m$, divide $[-2,2]^2$ into $m^2$ closed boxes  and lead to decomposition of $E$ as $E = \mathcal{S}\cup (E\cap \mathcal{T}) $ and hence we can write
\begin{equation}\label{first step 0}
    \pi_{f,f',E }(x) =  \pi_{f,f',\mathcal{S} }(x) +  \pi_{f,f',E \cap \mathcal{T} }(x).
\end{equation}
Recall that the estimate for $\pi_{f,f',E \cap \mathcal{T} }(x)$ has already been  computed in \eqref{second term estimation} which gives
\begin{equation}\label{second term estimation a}
\pi_{f,f',E \cap \mathcal{T} }(x) 
     = O\left( \frac{ L\alpha }{m}\pi(x) +  L m \alpha \frac{\pi(x) }{\mathcal{M}(x)}\right).
    \end{equation}
    It remains to estimate $\pi_{f,f',\mathcal{S} }(x)$, which has an improvement coming from the geometric restrictions in Hypothesis \ref{hypothesis}. 

Fix $j$ such that $0\le j\le m-1$, and consider the $j$th vertical strip 
 \begin{align*}
V_j=\begin{cases}
    \Bigl[-2+\frac{4\tau j}{m}, -2+\frac{4\tau (j+1)}{m}\Bigr]\times[-2,2], \quad &{\rm if} ~j\le m-2, \\
    ~\\
\Bigl[-2+\frac{4\tau (m-1)}{m}, 2 \Bigr]\times[-2,2], \quad & {\rm if} ~j=m-1.
\end{cases}
 \end{align*}
We note that after merging adjacent boxes, $\mathcal S \cap V_j$ can be written as a union of connected components, which are rectangular vertical substrips. Assuming that the number of such components is $N_j$, we claim that 
$$
N_j\le 1+\alpha\beta.
$$
To prove this, recall that $\partial E = \Gamma_1\cup \cdots \cup \Gamma_{\alpha}$, where each $\Gamma_t$ is either a vertical line or it intersects any vertical line at most $\beta$ times.
Choose
$$
a_j\in\Bigl(-2+\frac{4\tau j}{m}, -2+\frac{4\tau (j+1)}{m}\Bigr) \quad {\rm if}~j\le m-2 \quad {\rm and} \quad a_j\in\Bigl(-2+\frac{4\tau j}{m}, 2\Bigr) \quad {\rm if}~j= m-1
$$
such that for any $t$, the curve $\Gamma_t$ does not lie entirely on the vertical line
$
L_j:=\{(u,v):u=a_j\}.
$
Since each intersection with $L_j$ increases the number of connected components by at most one, it is clear that
$$
N_j \le 1+\#(\partial E\cap L_j)= 1+\sum_{t=1}^\alpha \#(\Gamma_t\cap L_j)\le 1+\alpha\beta.
$$
As there are $m$ vertical strips,  after merging adjacent boxes in each vertical strip, the set $\mathcal S$ is a union of at most $m(1+\alpha\beta)$ connected components that are vertical substrips such that any two such substrips in $V_j$ are disjoint. It follows that each of these vertical substrips of $\mathcal S$ is a rectangle, and any two of them can intersect only on the lines $v_j$ for some $1\le j\le m-1$ whose $u$-coordinates are transcendental. Thus, for any prime $p$, the pair lies in a unique such vertical substrip of $\mathcal S$. Applying the estimate \eqref{lemma 1.2 analogue} to each of these substrips and summing over all of them, we obtain
\begin{equation}\label{first term estimation b}
       \pi_{f,f',\mathcal{S} }(x) =  \mu_{\rm JST}(\mathcal{S})\pi(x) + O\left(  m\alpha\beta \frac{ \pi(x)}{\mathcal{M}(x)} \right).
\end{equation}
Substituting \eqref{second term estimation a} and \eqref{first term estimation b} into \eqref{first step 0} yields
$$
        \pi_{f,f',E }(x) =\mu_{\rm JST}(\mathcal{S}) \pi(x) + O\left(m\alpha\beta\frac{ \pi(x) }{\mathcal{M}(x)} \right) +  O\left( \frac{ L\alpha}{m}\pi(x) + L m \alpha \frac{\pi(x)}{\mathcal{M}(x)}\right).
$$
Choosing
$$m = \left\lfloor \mathcal{M}(x)^{1/2} \right\rfloor + 1,$$
we obtain
  $$\pi_{f,f',E }(x) = \mu_{\rm JST}(\mathcal S)\pi(x) +  O\left( L\alpha\beta \frac{\pi(x)  }{\mathcal{M}( x)^{1/2}}\right).$$
As argued in the proof of \Cref{thm:main}, we have  $\mu_{\rm JST}(\mathcal S)\rightarrow \mu_{\rm JST}(E)$ as $x \rightarrow \infty$.
Thus, for sufficiently large $x$
$$
        \pi_{f,f',E }(x) = \mu_{\rm JST}(E)\pi(x) +  O\left( L\alpha\beta \frac{\pi(x)  }{\mathcal{M}( x)^{1/2}}\right),
$$
which completes the proof.

\section{Proof of \Cref{thm:sign change}}
We first establish an auxiliary lemma.
\begin{lemma}\label{lemma: alpha}
Let $P(u,v)\in {\R}[u,v]$ be a non-constant polynomial of degree $\delta$, and $$F  =\{(u,v)\in\R^2:P(u,v)=0\}.$$ Then the number of connected components of $
F\cap[-2,2]^2$ is finite. Further, if 
$$
F\cap[-2,2]^2=\Gamma_1\cup \cdots\cup \Gamma_{\alpha},
$$
is the decomposition into connected components,
then the following assertions hold.
\begin{enumerate}
\item 
Each $\Gamma_i$ is continuous.
\item
$
\alpha\ll\delta^3.$
\item 
$\operatorname{length}(\Gamma_i)\le 2\sqrt 2 \pi \delta$, and hence $\sum_{i=1}^{\alpha}\operatorname{length}(\Gamma_i)\ll\delta^4.$
\end{enumerate}
Here, the implied constants are absolute and effectively computable.
\end{lemma}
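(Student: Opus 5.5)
We reduce to a bounded, clean problem by removing repeated factors. First we replace $P$ by its square-free part $P_0$, the product of the distinct irreducible factors of $P$ over $\R$. This does not change $F$, and $\deg P_0\le\delta$. The set $F\cap[-2,2]^2$ is then a compact real semi-algebraic set. Finiteness of its connected components and the bound $\alpha\ll\delta^3$ will follow from Milnor--Thom type bounds (Milnor, Petrovskii--Oleinik, Warren). Concretely, we view $F\cap[-2,2]^2$ as the set defined by $P_0=0$ together with the four inequalities $u+2\ge0$, $2-u\ge0$, $v+2\ge0$, $2-v\ge0$. The number of connected components of a set defined by $s$ polynomial (in)equalities of degree $\le d$ in $\R^2$ is $\ll (sd)^{2}$, or, with the crude Milnor bound, $\ll d(2d-1)$ per sign condition summed over $\ll s^2$ conditions. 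Either route gives $\alpha\ll\delta^3$ (indeed $\ll\delta^2$), which is all the lemma needs.

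For (1), we interpret ``continuous'' as the claim that each component $\Gamma_i$ is the image of a continuous curve, so that the machinery of Theorem~\ref{thm:main2} applies. A compact semi-algebraic set of dimension $\le1$ admits a semi-algebraic triangulation into finitely many points and open arcs, each the image of a continuous semi-algebraic map from $[0,1]$. A connected finite graph can be traversed by one continuous path that covers every edge at most twice (a doubled spanning walk). Hence each $\Gamma_i$ is the image of a continuous $\gamma:[0,1]\to\R^2$. Since $P_0\neq0$, $F$ contains no open set, so its dimension is at most 1 and the triangulation statement applies.

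For (3), the main tool is the Cauchy--Crofton formula: for a rectifiable curve $\Gamma$,
\[
\ell(\Gamma)=\tfrac14\int_0^{2\pi}\!\!\int_{\R} n_\Gamma(\theta,r)\,dr\,d\theta ,
\]
where $n_\Gamma(\theta,r)$ counts intersections of $\Gamma$ with the line $u\cos\theta+v\sin\theta=r$. For almost every line, either the line is not a component of $F$ (at most $\delta$ lines are), or restricting $P_0$ to it gives a nonzero univariate polynomial of degree $\le\delta$. In the latter case $n\le\delta$. Only lines meeting $[-2,2]^2$ contribute, so $|r|\le2\sqrt2$. This gives $\ell(\Gamma_i)\le\frac14\cdot2\pi\cdot4\sqrt2\cdot\delta=2\sqrt2\pi\delta$, exactly the stated constant. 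Summing over the $\alpha\ll\delta^3$ components gives $\ll\delta^4$. Strictly speaking, applying Crofton to the traversing path of (1) would double-count edges. To avoid this we apply it to the one-dimensional set $\Gamma_i$ itself, via its $1$-dimensional Hausdorff measure, which is what $\ell$ measures for an arc decomposition. Alternatively, we apply the bound arc by arc and use the fact that the intersection counts over all arcs still sum to at most $\delta$ per line.

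The main obstacle is (1) together with the bookkeeping that the length used in Theorem~\ref{thm:main2} matches the Crofton measure. We handle it by citing semi-algebraic triangulation (Bochnak--Coste--Roy) and by defining the length of $\Gamma_i$ as the sum of the lengths of its arcs. The component-count bound is a direct citation.
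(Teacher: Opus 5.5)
Your proof is correct. Part (3) is essentially the paper's Cauchy--Crofton argument, while (1) and (2) take different routes.

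For (1), the paper observes that each $\Gamma_i$ is a compact, connected, locally connected metric space and invokes Hahn--Mazurkiewicz. Your triangulation plus an Euler circuit of the doubled graph is more work, but it gives a parametrization of length at most $2\mathcal{H}^1(\Gamma_i)$. That is what the ball-covering step in the proof of Theorem~\ref{thm:main} actually needs, and Hahn--Mazurkiewicz gives no length control.

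Your care about multiplicities is justified. The paper's $N_i(r,\psi)$ counts points of the set on a line, not parameter values, so it too is bounding $\mathcal{H}^1(\Gamma_i)$. The constant $2\sqrt2\pi\delta$ is really a bound for that quantity: for $\delta$ equally spaced lines through the origin with $\delta$ large, every parametrization of the single component is longer than $2\sqrt2\pi\delta$. Losing a factor $2$ is harmless for the $\ll$ bounds.

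For (2), the paper bounds the components of the whole curve $F$ by Milnor's $\delta(2\delta-1)$. It then uses B\'ezout on the four sides of the square to show each component is cut into at most $4\delta+1$ pieces, giving $\alpha\le(4\delta+1)\delta(2\delta-1)$. You bound the components of the semi-algebraic set directly and get the stronger $\alpha\ll\delta^2$. Your ``per sign condition'' variant is muddled. A clean citation is Milnor's companion bound $\tfrac12(k+2)(k+1)$ in the plane for sets $\{f_1\ge0,\dots,f_p\ge0\}$ of total degree $k$. Applied to $\{-P_0^2\ge0,\ 2\pm u\ge0,\ 2\pm v\ge0\}$, it gives $\alpha\le(\delta+3)(2\delta+5)$.

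Working with the square-free part rather than irreducible factors is also the better choice in (3). A component of $F\cap[-2,2]^2$ can be glued from pieces of several factors, so the paper's factor-by-factor reduction does not literally give the per-component bound; your direct argument does.

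Finally, your remark that intersection counts over all arcs sum to at most $\delta$ per line applies to all components at once, so $\sum_i\mathcal{H}^1(\Gamma_i)\le2\sqrt2\pi\delta$. The $\ll\delta^4$ in (3) is therefore wasteful. Combined with $\alpha\ll\delta^2$, the factor $\delta^8$ in Theorem~\ref{thm:sign change} would drop to $\delta^4$.
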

\begin{proof}
It suffices to consider the case when $P$ is irreducible, since the general case follows by applying the argument below to each irreducible factor and summing the resulting bounds.
%We first argue that $F\cap[-2,2]^2$ has finitely many connected components. 
Since
$$
F\cap[-2,2]^2
=
\{(u,v)\in[-2,2]^2:P(u,v)=0\},
$$
is a semi-algebraic subset of $\R^2$, therefore by
\cite[Theorem 2.4.5]{bochnak}, it has finitely many connected components. Thus, we can write
$$
F\cap[-2,2]^2=\Gamma_1\cup\cdots\cup \Gamma_{\alpha},
$$
as the union of connected components.\\
\textbf{$(i)$} We notice that for each $1\le i\le \alpha$, the connected component $\Gamma_i$  is a compact, connected, locally
connected metric space. Hence, by \cite[Theorem 3-30]{hocking}, each component is a continuous image of a closed interval, and therefore a
continuous curve.

\textbf{$(ii)$} The claim is immediate when $\delta=1$. Assume now that $\delta\ge 2$.
Let
$$
        F = \Gamma_1 ' \cup \cdots\cup \Gamma_\eta ',
$$
be the decomposition of $F$ into its connected components.  By \cite[Theorem 2]{milnor}, we have
 $\eta\le \delta (2\delta-1).$
 To obtain an upper bound of $\alpha$, we compute the number of connected components of $\Gamma_i '\cap [-2,2]^2,$ for $1\le i \le \eta$, for that, let
$$
b_0(\Gamma_i')
:=
\#\bigl\{
\text{connected components of }
\Gamma_i'\cap[-2,2]^2
\bigr\}, \quad \text{for } {1\le i\le \eta}.
$$
Since $\Gamma_1,\ldots,\Gamma_\alpha$ are the connected components of $F\cap[-2,2]^2$ , we obtain
$$
\alpha
=
\sum_{i=1}^{\eta} b_0(\Gamma_i').
$$
Let $M_i$ denote the number of intersection points of $\Gamma_i'$ with the boundary of the square $[-2,2]^2$. As the boundary comprises of four lines
$
u=\pm2,
v=\pm2,
$
B\'ezout's theorem implies that
$$
M_i\le 4\delta.
$$
Furthermore, each intersection point of $\Gamma_i '$ with the boundary lines increases the number of connected components of $\Gamma_i '\cap[-2,2]^2$ at most by $1$, thus   
$$
b_0(\Gamma_i')
\le M_i+1
\le 4\delta+1.
$$
Summing over $i$, we obtain
$$
\alpha
\le
\sum_{i=1}^{\eta}(4\delta+1)
=
(4\delta+1)\eta
\le
(4\delta+1)\delta(2\delta-1)\ll\delta^3.
$$
\textbf{$(iii)$}
 By the Cauchy--Crofton formula (see \cite[Theorem 5]{ayari}), for each $1\le i\le \alpha$,
\begin{equation}\label{eq:cauchy-crofton}
{\rm length}(\Gamma_i)
=
\frac14
\int_0^{2\pi}
\int_{\R}
N_i(r,\psi)\,dr\,d\psi,
\end{equation}
where
\begin{equation}\label{eq: N i r psi}
        N_i(r,\psi)
            \coloneqq\#\Big\{\Gamma_i \cap\{(u,v)\in[-2,2]^2:u\cos\psi+v\sin\psi=r\}\Big\}.
\end{equation}
If $(u,v)\in [-2,2]^2$, then $
            |u\cos\psi+v\sin\psi| \le 2(|\cos\psi|+|\sin\psi|) \le 2\sqrt2,
$
which gives $N_i(r,\psi)=0$ whenever $|r|>2\sqrt2$. Therefore, \eqref{eq:cauchy-crofton} becomes
\begin{equation}\label{eq:crofton-restricted}
{\rm length}(\Gamma_i)
=
\frac14 \int_0^{2\pi} \int_{-2\sqrt2}^{2\sqrt2}    N_i(r,\psi)\,dr\,d\psi.
\end{equation}
 Fix $(r,\psi)$ and consider the line
$$
    L(r,\psi) = \{(u,v)\in\R^2 : u \cos\psi + v \sin\psi = r\}.
$$
We claim that the intersection number $N_i(r,\psi)$ is at most $\delta$, unless $L(r,\psi)\cap [-2,2]^2\subset \Gamma_i$. 
In order to prove this, we see that restricting the irreducible polynomial $P(u,v)$ to $L(r,\psi)$ yields a polynomial in one variable of degree at most $\delta$. Therefore, either $N_i(r,\psi)\le \delta$, or $P$ vanishes identically on $L(r,\psi)$, in which case $L(r,\psi)\subset F$. Since $P$ is irreducible, the latter can occur for at most one pair $(r,\psi)$, otherwise, $P$ would be divisible by two distinct linear factors. Consequently, $N_i(r,\psi)$ is infinite only on a set of measure zero in the $(r,\psi)$-plane, and thus
 $ 
        N_i(r,\psi)\le \delta
 $
for almost every $(r,\psi)$. Substituting this bound into \eqref{eq:crofton-restricted}, we obtain
$$
{\rm length}(\Gamma_i)
\le
2\sqrt2\pi\delta.
$$
Summing over $1\le i\le \alpha$ gives
$$
\sum_{i=1}^{\alpha}{\rm length}(\Gamma_i)
\le
2\sqrt2 \pi\alpha\delta.
$$
Finally, using the bound for $\alpha$ obtained in part $(ii)$ completes the proof.
\end{proof}
With Lemma \ref{lemma: alpha} at hand, we proceed to prove Theorem \ref{thm:sign change}.
Let
$$A := \min_{-2\le u,v\le 2}P(u, v)    \quad {\rm and} \quad B := \max_{-2\le u,v\le 2}P(u, v),$$ 
so that without loss of generality, we may assume that  $I\subset [A, B]$. Let $I=[t_1,t_2]$ (the proofs in the cases of open and semi-open intervals are similar).
By the Hecke relation \eqref{eq:hecke}, we have
$$P(a(p),a'(p))=P(u,v), \quad {\rm for}~p \nmid NN'.$$ Consequently,
\begin{equation}\label{eq:proof2.1}
\{p\le x :P(a(p),a'(p))\in I\}=\{p\le x : (a(p),a'(p))\in  E_{I,P}\},
\end{equation}
where 
$$
E_{I,P}=\{(u,v)\in [-2,2]^2:t_1\le  P(u,v)\le t_2\}.$$ 
Since the boundary of $E_{I,P}$  is contained in the union of the real algebraic curves
$$u=\pm 2,\quad v=\pm 2, \quad P(u,v)=t_1,\quad P(u,v)=t_2,$$
inside $[-2,2]^2$, 
therefore, applying Lemma \ref{lemma: alpha} to these curves, we see that 
$$
    \partial E_{I,P} = \Gamma_1\cup \cdots\cup\Gamma_{\alpha}
$$
where each $\Gamma_i$ is a continuous curve and more importantly, the zero locus of a polynomial.
By Lemma \ref{lem:alg}, $E_{I,P}$ satisfies Hypothesis \ref{hypothesis} for $\beta=\delta$, and thus an application of \Cref{thm:main2} gives
\begin{equation}\label{eq: ejst E I P}
\#\{p\le x : (a(p),a'(p))\in  E_{I,P}\} = \mu_{\rm JST}(E_{I,P})\pi(x) + O\left( L\alpha\beta\dfrac{\pi(x)}{\mathcal M(x)^{1/2}}\right).
\end{equation}
Further, it follows from Lemma \ref{lemma: alpha} that 
    $$
              \alpha = O(\delta^3) \quad \text{and} \quad  L = {\rm length}(\partial E) = O(\delta^4),    
    $$  
    and substituting these in \eqref{eq: ejst E I P} and
    combining  with \eqref{eq:proof2.1} concludes the proof. 
\section{Proof of \Cref{thm:simultaneous sign change of symmetric power coefficients}}
Define the sets 
\begin{align*}
E_{+}&=\{(u,v)\in [-2,2]^2: U_m(u/2)U_n(v/2)>0\}~{\rm and}\\
    E_{-}& =\{(u,v)\in [-2,2]^2: U_m(u/2)U_n(v/2)<0\}.
    \end{align*}
Applying  \Cref{thm:sign change} for the polynomial $P(u,v)=U_m(u/2)U_n(v/2)$ and for the intervals $(0,\infty)$ and  $(-\infty, 0)$, we obtain
\begin{align}\label{eq:2.3}
    \#\{p\le x :  a(p^m)a'(p^n)>0\}&= \mu_{\rm JST}(E_+){\pi(x)} + O\left( \dfrac{\pi(x)}{\mathcal M(x)^{1/2}}\right) ~ {\rm and}\\
    \#\{p\le x :  a(p^m)a'(p^n)<0\}&= \mu_{\rm JST}(E_-){\pi(x)} + O\left( \dfrac{\pi(x)}{\mathcal M(x)^{1/2}}\right), \notag
\end{align}
respectively.
Since $\{(u,v) \in [-2,2]^2: U_m(u/2)U_n(v/2)=0\}$ has joint Sato--Tate measure zero, we have
 $$\mu_{\rm JST}(E_-)=1-\mu_{\rm JST}(E_{+}).$$ 
 Thus, to prove the theorem, it suffices to compute $\mu_{\mathrm{JST}}(E_{+})$.
 Making the change of variables
$$
u=2\cos\theta,\qquad v=2\cos\phi,\qquad \theta,\phi\in[0,\pi]
,$$ 
we see that
$$d\mu_{\rm JST}=\frac{4}{\pi^2}\sin^2\theta\sin^2\phi \,d\theta d\phi.$$
Therefore, we have
\begin{equation*}\label{eq:measure1}
\mu_{\rm JST}(E_{+}) = \frac{4}{\pi^2}\int \int_{U_m(\cos\theta)U_n(\cos\phi)>0} \sin^2\theta \sin^2\phi\, d\theta d\phi.
\end{equation*}
The region of integration is the disjoint union of the sets where both the factors $U_m(\cos\theta)$ and $U_n(\cos\phi)$ are positive, and both are negative. Consequently,
\begin{equation}\label{eq:measure}
  \mu_{\rm JST}(E_{+}):=  d_{m,n}= d_md_n+(1-d_m)(1-d_n),
\end{equation}
where, for $\ell\ge1$,
$$
d_\ell=\frac{2}{\pi}\int_{0}^{\pi}\mathbf{1}_{\{U_{\ell}(\cos\theta)>0\}}\sin^{2}\theta\,d\theta,
$$
with $\mathbf{1}_{\{U_{\ell}(\cos\theta)>0\}}$ denoting the indicator function of the set $\left\{\theta : U_{\ell}(\cos\theta)>0\right\}$.
By \cite[Theorem~1.1]{jaban}, one has
\begin{equation*}\label{eq:dl}
   d_\ell =
\begin{cases}
\frac12, & \ell \text{ is odd},\\
\frac{\ell+2}{2(\ell+1)} - \frac{1}{2\pi}\tan \left(\frac{\pi}{\ell+1}\right), & \ell \text{ is even.}
\end{cases}
\end{equation*}
Combining it with \eqref{eq:2.3} and \eqref{eq:measure} completes the proof.

\section{Proof of \Cref{cor:simultaneous}}
Consider the sets
\begin{align*}
    E_{+} &\coloneqq \{(u,v)\in [-2,2]^2: P(U_m(u/2),U_n(v/2)) > 0\} ~{\rm and} \\
        E_{-} &\coloneqq \{(u,v)\in [-2,2]^2: P(U_m(u/2),U_n(v/2)) < 0 \}.
\end{align*} 
  Hence, in view of  the identity
$$P(a(p^m),a'(p^n))=P(U_m(a(p)/2), U_n(a'(p)/2)),\quad{\rm for}~p \nmid NN',$$ 
  applying Theorem \ref{thm:sign change}  for the polynomial $P(U_m(u/2),U_n(v/2))$ and intervals $(0,\infty)$ and $(-\infty,0)$, respectively, we obtain
\begin{align*}\label{eq:2.5}
    \#\{p\le x :  P(a(p^m),a'(p^n))>0\}&= \mu_{\rm JST}(E_+){\pi(x)} + O\left( \dfrac{\pi(x)}{\mathcal M(x)^{1/2}}\right)~ {\rm and}\\
    \#\{p\le x :  P(a(p^m),a'(p^n))<0\}&= \mu_{\rm JST}(E_-){\pi(x)} + O\left( \dfrac{\pi(x)}{\mathcal M(x)^{1/2}}\right). \notag
\end{align*}
Since the sets $E_{+}$ and $E_{-}$ are complementary  up to a set of measure zero, we have  
\begin{equation*}
    \mu_{\rm JST}(E_{+})+ \mu_{\rm JST}(E_{-})=1.
\end{equation*}
Therefore, to complete the proof, it suffices to show that \begin{equation}\label{eq:equal}
    \mu_{\rm JST}(E_{+})=\mu_{\rm JST}(E_{-})=\frac{1}{2}.
\end{equation}
Firstly, let $m$ or $n$ be odd. Suppose that $
   P(su,tv)=-P(u,v),$  ${\text{for some }} s\in\{(\pm 1)^m\},\, t\in\{(\pm 1)^n\}.
   $
  For such choices of $s$ and $t$, one can always choose  $\alpha,\beta\in\{\pm1\}$ such that 
$$\alpha^m=s, ~~~~\beta^n=t.$$
The symmetry condition on $P$ and the identity,
$
U_\ell(-u)=(-1)^\ell U_\ell(u){\text{ for }}\ell\ge0, \,u\in\R,$
give
\begin{equation}\label{eq:simultaneousid1}
    P\big(U_m(\alpha u/2),\,U_n(\beta v/2)\big)
= -P\big(U_m(u/2),\,U_n(v/2)\big).
\end{equation}
Next, we define an involution map $\sigma:[-2,2]^2\to[-2,2]^2$ given by
$$
\sigma(u,v):= (\alpha u, \beta v).
$$
This map simply changes the signs of $u$ and $v$ (depending on $\alpha$, $\beta$), hence the Jacobian of 
$\sigma $ has the absolute value $1$. Thus, $\sigma$ preserves the measure $\mu_{\rm JST}$. In particular, we have
\begin{equation}\label{eq:simultaneousid123}
\mu_{\rm JST}(E_{+})= \mu_{\rm JST}(\sigma(E_{+})).
\end{equation}
From the definitions of $\sigma$ and the set $E_{+}$, we can write
\begin{align*}
    \sigma(E_{+})&=\{(\alpha u,\beta v)\in [-2,2]^2: P(U_m(u/2),U_n(v/2)) > 0\}\\
    &=\{(u,v)\in [-2,2]^2: P(U_m(\alpha u/2),U_n(\beta v/2)) > 0\}.
\end{align*}
Using \eqref{eq:simultaneousid1}, we have
$$
 \sigma(E_{+})=\{(u,v)\in [-2,2]^2: P(U_m(u/2),U_n(v/2)) < 0\} =E_{-}.
$$
Combining this with \eqref{eq:simultaneousid123} completes the proof of  \eqref{eq:equal} in this case. 

Next, assume that $m=n$ and $P(u,v)=-P(v,u)$. 
Let $\sigma:[-2,2]^2\rightarrow [-2,2]^2$ be the map defined by 
$\sigma(u,v) =(v,u).$ Using similar arguments as before, one can easily show that $\sigma$ is a measure preserving involution map and 
$$\sigma(E_{+})=E_{-}$$
from which \eqref{eq:equal} follows.
\section{Proof of \Cref{thm:first sign change}}
From \Cref{thm:simultaneous sign change of symmetric power coefficients}, there exists a real number $d> 0$ (depending on $m$ and  $n$) such that
$$\#\{p \leq x: a(p^m)a'(p^n)<0 \} \geq (1-d_{m,n})\pi(x) - d \frac{\pi(x)}{\mathcal{M}(x)^{1/2}},
$$
where $\mathcal{M}(x)$ and $d_{m,n}$ are given by \eqref{eq:mx} and \eqref{eq:dmn}, respectively. To get an upper bound for $p_{f,f',m,n}$, it suffices to get an $x>0$ for which the right-hand side in the last inequality is strictly positive. Thus, we now determine a value of $x$, such that 
\begin{equation*}
(1-d_{m,n})> d\frac{1}{\mathcal{M}(x)^{1/2}}.
\end{equation*}
By the definition of $\mathcal M(x)$, and letting $A = {d}/{(1-d_{m,n})}$, we obtain
\begin{equation}\label{eq: proof of first sign change}
         (\log x)^{1/2} > A^4 \log(kk'NN'\log x).
\end{equation}
Putting $y=\log x$ gives
$$
\sqrt y > A^4(\log (kk'NN') + \log y).
$$
Thus, we aim to find $y$ for which
$$
\sqrt y > 2A^4\log (kk'NN') \quad\text{and}\quad  \sqrt y > 2A^4\log y.
$$
The first inequality holds whenever $$y\ge 4A^8(\log (kk'NN'))^2,$$
whereas, the second inequality holds true for all $y\ge C_1$, for some constant $C_1$ (depending on $A$, that is on $m$ and $n$) as
$\sqrt y/\log y \to\infty$.
Hence, for 
$$ x \ge \exp(C(\log (kk'NN'))^2)$$ with a suitable constant $C>0$, inequality \eqref{eq: proof of first sign change} follows, which completes the proof of \Cref{thm:first sign change}.

\section{Numerical illustration of simultaneous sign and dominating behaviour.}
Take $k=4$, $k'=6$, $N=5$, and $N'=6$. Consider the newforms
\begin{align*}
f(z) &= \sum_{n\ge 1} a(n)n^{3/2} q^n 
= q - 4q^2 + 2q^3 + 8q^4 - 5q^5 - 8 q^6 + 6q^7 - 23q^9 + O(q^{10}), \\
f'(z) &= \sum_{n\ge 1} a'(n)n^{5/2} q^n 
= q + 4q^2 - 9q^3 + 16q^4 - 66q^5 - 36q^6 + 176q^7 + 64q^8 + 81q^9 + O(q^{10}),
\end{align*}
which are the unique normalised non-CM newforms in $S_4(5)$ and $S_6(6)$, respectively.

Figure \ref{fig: same sign} approximates the frequency of primes $p\le x$ for which the products $a(p)a'(p)$, $a(p)a'(p^2)$  and $a(p^2)a'(p^2)$ are positive, where we have taken $x\le100000$. For each $x$ (shown on the $x$-axis), we plot on the $y$-axis the fraction 
$$\#\{p\le x : a(p^m)a'(p^n)>0\}/\pi(x)$$
which approximates the density $d_{m,n}$, where $d_{m,n}$ is given by \eqref{eq:dmn}.
  When $(m,n)=(1,1)$ and $(m,n)=(1,2)$, evidently, this proportion quickly stabilises near $1/2$, indicating that positive and negative signs occur with roughly the same frequency. In contrast, when $(m,n)=(2,2)$, the proportion settles at a value strictly larger than $1/2$, revealing a clear bias toward positivity ($d_{2,2}=0.534\dots$). These behaviours are in complete agreement with \Cref{thm:simultaneous sign change of symmetric power coefficients}, which asserts that no sign bias appears when at least one of the symmetric powers is odd, while a positive bias emerges when both are even.
%\begin{comment}
 \begin{figure}[h]
\includegraphics[width=0.6\linewidth]{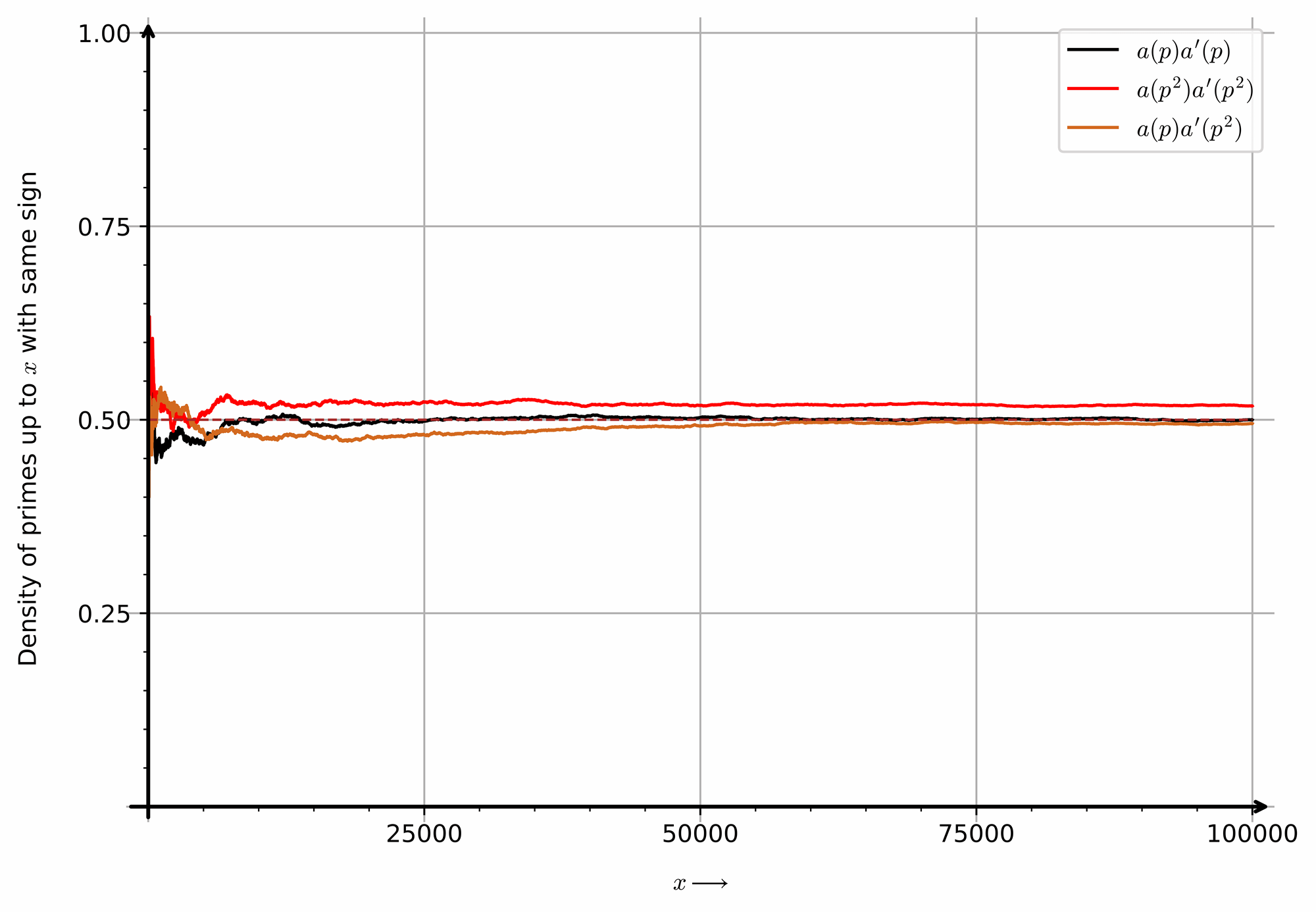}
\caption{Density of primes $p\le x$ such that $a(p)a'(p)$, $a(p^2)a'(p^2)$, and $a(p)a'(p^2)$ are positive,}
    \label{fig: same sign}
\end{figure}
%\end{comment}

Figure \ref{fig: dominating coefficients} demonstrates the proportion of primes $p\le x$ that satisfy $a(p) < a'(p^3)$, $a(p^2)< a'(p^2)$, and $a(p^2)<a'(p^3)$, where we have taken $x\le100000$. For each $x$ (shown on the $x$-axis), we plot on the $y$-axis the fraction 
$$\#\{p\le x : a(p^m)<a'(p^n)\}/\pi(x).$$ 
For the choices $(m,n)= (1,3)$ and $(m,n)= (2,2)$, the densities appear rapidly converging towards $1/2$. This indicates that the Fourier coefficients dominate one another with asymptotically equal frequency. On the other hand, for $(m,n)= (2,3)$, the numerical evidence shows that the density exceeds $1/2$, suggesting that when $m$ and $n$ are of opposite parity, the Fourier coefficients of one form dominate those of the other more frequently which is in complete agreement with Corollary \ref{thm:compare}.
%\begin{comment}
\begin{figure}[h]
    \includegraphics[width=0.6\linewidth]{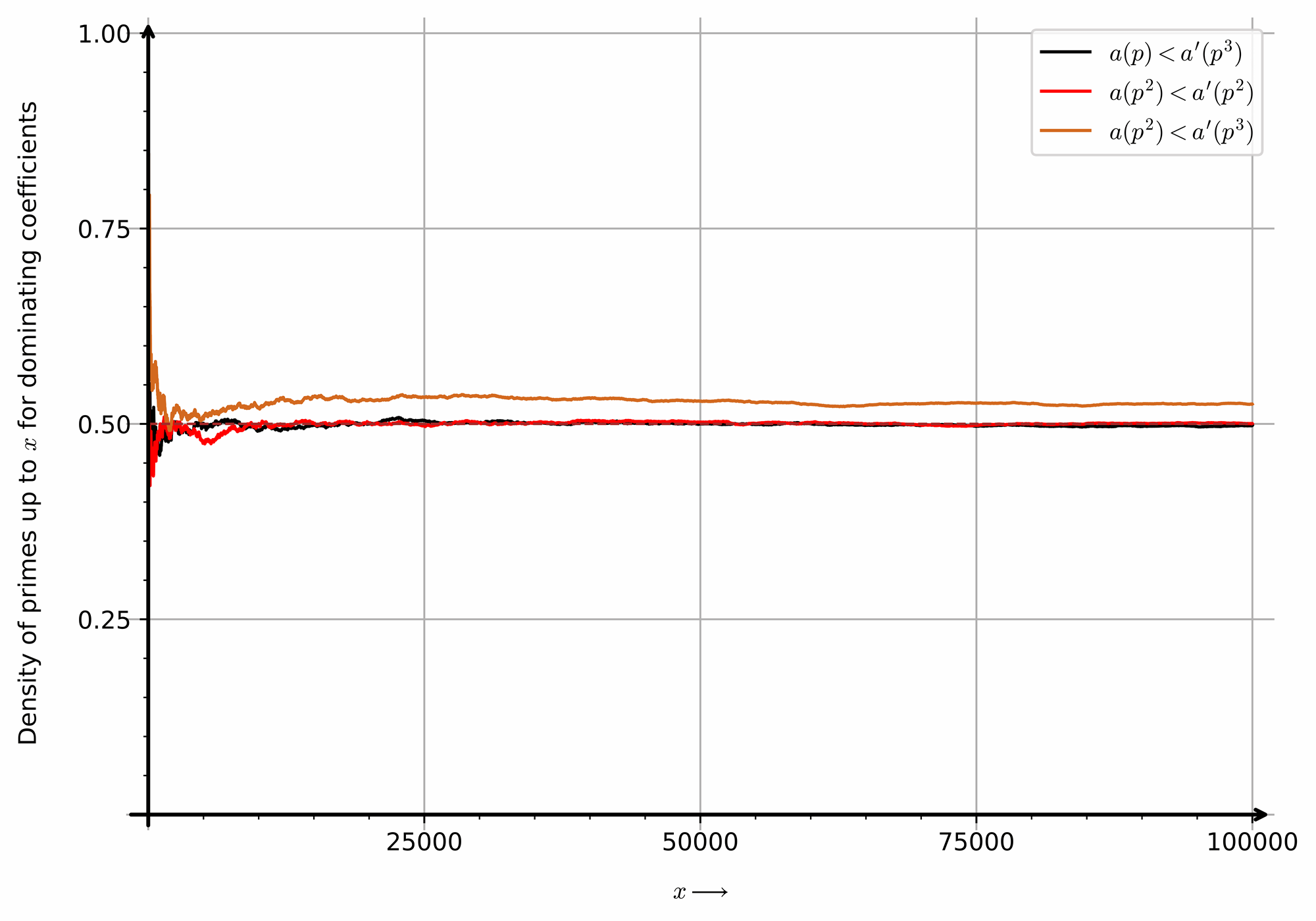}
    \caption{Density of primes $p\le x$ such that $a(p) <a'(p^3)$, $a(p^2)< a'(p^2)$, and $a(p^2)<a'(p^3).$}
    \label{fig: dominating coefficients}
\end{figure}
%\end{comment}
\section*{Acknowledgement}
The authors are grateful to Prof. J. Thorner for his valuable suggestions during the early stages of this work and for drawing their attention to \cite{thorner2}. The open-source mathematical software SageMath was used to generate the plots appearing in this paper. AK was supported by ANRF under the Research Grant ANRF/ARGM/2025/000494/MTR. MK was supported by SEED Grant SGT-100115.
    \bibliography{reference}
	\bibliographystyle{alpha}

\end{document}